\theoremstyle{plain}
\newtheorem{theorem}{Theorem}\numberwithin{theorem}{section}
\newtheorem{lemma}{Lemma}\numberwithin{lemma}{section}
\numberwithin{proposition}{section}
\newtheorem{corollary}{Corollary}\numberwithin{corollary}{section}
\theoremstyle{definition}
\newtheorem{definition}{Definition}\numberwithin{definition}{section}
\theoremstyle{remark}
\newtheorem{remark}{Remark}\numberwithin{remark}{section}
\newcommand{\R}{\mathbb{R}}
\newcommand{\J}{\mathbb{J}}
\newcommand{\F}{{}_1F_2}
\newcommand{\bx}{\mathbf{x}}
\title{An extension of positivity for integrals of Bessel functions and Buhmann's radial basis functions}
\author{Yong-Kum Cho, Seok-Young Chung and Hera Yun\footnote{E-mails: ykcho@cau.ac.kr,\hskip.2cm
sychung@cau.ac.kr, \hskip.2cm herayun06@gmail.com}}
\date{\small Department of Mathematics, College of Natural Science, Chung-Ang University,\\
84 Heukseok-Ro, Dongjak-Gu, Seoul 06974, Korea}
\begin{document}

\maketitle

\bigskip

{\bf Abstract.} As to the Bessel integrals of type
\begin{equation*}
\int_0^x \left(x^\mu-t^\mu\right)^\lambda t^\alpha J_\beta(t)dt\qquad(x>0),
\end{equation*}
we improve known positivity results by making use of new positivity criteria
for $\F$ and ${}_2F_3$ generalized hypergeometric functions. As an application, we extend Buhmann's
class of compactly supported radial basis functions.

\bigskip

{\bf Keywords.} {\small Bessel function, Fourier transform,
generalized hypergeometric function, Newton diagram, radial basis function, positivity.}

\bigskip

{\small {\bf  2010 Mathematics Subject Classification}: 33C20, 41A30, 42B10.}

\section{Introduction}
We consider the problem of determining positivity of the integrals
\begin{equation}\label{A1}
\int_0^x \left(x^\mu-t^\mu\right)^\lambda t^\alpha J_\beta(t)dt\qquad(x>0)
\end{equation}
for appropriate values of parameters $\,\mu,\,\lambda,\,\alpha,\,\beta,\,$ where $J_\beta$ stands for the
Bessel function of order $\beta$. For the sake of convergence and practical applications, it is common to assume
$\,\mu>0,\,\lambda\ge 0,\,\beta>-1,\,\alpha+\beta+1>0.\,$

Owing to various applications, the problem has been studied by many authors over a long period of time
and we refer to Askey \cite{As1} and Gasper \cite{Ga1} for historical backgrounds. Of our primary concern is the result of
Misiewicz and Richards which states in a simplified version as follows.

\begin{itemize}
\item[] {\bf Theorem A.} (Misiewicz and Richards \cite{MR}) {\it Let $\mathcal{A}$ be the set of parameters $(\beta, \alpha)$ defined by
$$\mathcal{A} = \left\{\beta>-\frac 12\,,\,\,-\beta-1<\alpha\le\min\left(\beta,\,\frac 32\right)\right\}.$$
For $\,0<\mu\le 1\,$ and $\,\lambda\ge 1,\,$ if $\,(\beta, \alpha)\in\mathcal{A},\,$ then
\begin{equation*}
\int_0^x \left(x^\mu-t^\mu\right)^\lambda t^\alpha J_\beta(t)dt>0 \qquad(x>0).
\end{equation*}}
\end{itemize}

An additional range of parameters $\,\alpha, \beta\,$ is also available.
In fact, if $j_{\beta, 2}$ denotes the second positive zero of $J_\beta$ and
$\alpha_*(\beta)$ the solution of
\begin{equation}
\int_0^{j_{\beta, 2}} t^{\alpha_*(\beta)} J_\beta(t) dt = 0
\end{equation}
for each $\beta$, then Misiewicz and Richards pointed out the above positivity holds
true for $\,-1<\beta<\frac 12,\,-\beta-1<\alpha<\alpha_*(\beta).$ As it is described in detail by Askey \cite{As2}, however,
the explicit nature of $\alpha_*(\beta)$ is still unknown and we shall exclude this range throughout.

In the special case $\,\alpha = \frac 12,\,\beta=-\frac 12,\,$ the integrals of \eqref{A1} reduce to
the Fourier cosine transforms for which Kuttner proved its positivity:

\begin{itemize}
\item[] {\bf Theorem B.} ( Kuttner \cite{K}) {\it For $\,0<\mu\le 1\,$ and $\,\lambda\ge 1,\,$
\begin{equation*}
\int_0^x \left(x^\mu-t^\mu\right)^\lambda \cos t\, dt>0 \qquad(x>0).
\end{equation*}}
\end{itemize}

\begin{figure}[!h]
 \centering
 \includegraphics[width=280pt, height=220pt]{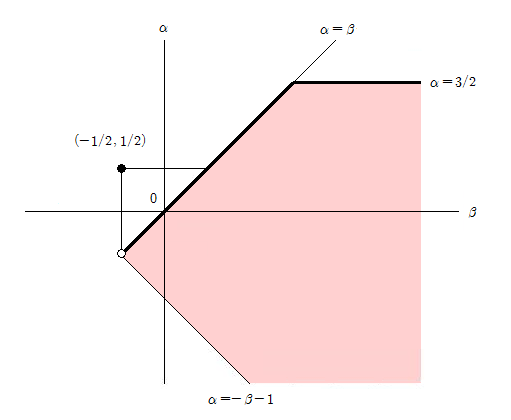}
 \caption{The positivity regions of Misiewicz-Richards (pink) and Kuttner (black dot)}
\label{Fig1}
\end{figure}

The combined positivity region is depicted in Figure \ref{Fig1}.

The main purpose of the present paper is to improve Theorem A and Theorem B by extending
positivity regions for $\,0<\mu\le 1,\,\lambda\ge 1\,$ as
well as by providing a positivity region for unrestricted $\,\mu>0,\,\lambda\ge 0.$

As an application of our results, we shall improve in several directions the range of positive definiteness for Buhmann's
class of compactly supported radial basis functions \cite{Bu} which are of considerable interest in the theory of
approximations and interpolations.

\section{Positivity in the unrestricted case}
For $\,\alpha, \beta\,$ satisfying $\,\beta>-1,\,\alpha+\beta+1>0,\,$ if we put
\begin{equation}
\Phi(x) = \F\left(\begin{array}{c} \frac{\alpha+\beta+1}{2}\\\beta+1,\,\frac{\alpha+\beta+3}{2}
\end{array}\biggl| -\frac{\,x^2}{4}\right),
\end{equation}
then it is simple to evaluate by integrating termwise or by parts
\begin{align}
&\int_0^x t^\alpha J_\beta(t) dt = \frac{x^{\alpha+\beta+1}}{2^\beta\Gamma(\beta+1)(\alpha+\beta+1)}\,\Phi(x),\\
&\int_0^x \left(x^\mu-t^\mu\right)^\lambda t^\alpha J_\beta(t)dt\nonumber\\
&\qquad=\frac{\mu\lambda x^{\mu\lambda +\alpha+\beta+1}}{2^\beta\Gamma(\beta+1)(\alpha+\beta+1)}
\int_0^1\Phi(xt) (1-t^\mu)^{\lambda-1} t^{\mu+\alpha+\beta}\,dt
\end{align}
for $\,\mu>0, \,\lambda>0\,$ and $\,x>0.$ Therefore positivity of \eqref{A1} would follow once kernel $\Phi$ were shown to be positive in the case
$\,\lambda=0\,$ or nonnegative in the case $\,\lambda>0.$

To investigate the sign of $\F$ generalized hypergeometric function $\Phi$, we shall make use of the
following general criterion recently established by the authors \cite{CY}, which will be applied subsequently in other
occasions as well.

As it is standard, the Newton diagram associated to a finite set of planar points
$\,\bigl\{\left(\alpha_i,\,\beta_i\right) : i= 1, \cdots, N\bigr\}\,$
refers to the closed convex hull containing
$$\bigcup_{i=1}^N\,\bigl\{(x, y)\in\R^2 : x\ge \alpha_i,\,\,y\ge \beta_i\bigr\}\,.$$

\begin{lemma}\label{lemma1}{\rm(Cho and Yun, \cite{CY})}
For $\,a>0,\,b>0,\,c>0,\,$ put
$$\phi(x)=\F\left(a\,; b, c\,; -\frac{\,x^2}{4}\right)\quad(x>0).$$
\begin{itemize}
\item[\rm(i)] If $\,\phi\ge 0,\,$ then necessarily
$\, b>a,\,\,c>a,\,\,b+c\ge 3a+\frac 12\,.$
\item[\rm(i)] Let $P_a$ denote the Newton diagram associated to
$$\,\Lambda = \left\{\left(a+\frac 12,\,2a\right),\,\left(2a,\,a+\frac 12\right)\right\}.\,$$
If $\,(b, c)\in P_a\,,$ then $\,\phi\ge 0\,$ and strict positivity holds unless $(b, c)\in\Lambda.$
\end{itemize}
\end{lemma}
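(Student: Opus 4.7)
My plan is to read off the three inequalities from the large-$x$ asymptotics of $\phi(x) = \F(a; b, c; -x^2/4)$. The standard expansion of $\phi$ at infinity has an algebraic tail of order $x^{-2a}$ whose coefficient is proportional to $\Gamma(b)\Gamma(c)/[\Gamma(b-a)\Gamma(c-a)]$, together with an oscillatory tail of order $x^{a-b-c+1/2}$. Since the oscillatory piece cannot be eventually one-signed, its order must not exceed that of the algebraic one, which gives $b + c \ge 3a + 1/2$; in addition the algebraic coefficient must be nonnegative, and a sign analysis of the Gamma factors then forces $b > a$ and $c > a$ (the degenerate boundary cases $b = a$ or $c = a$ collapse $\F$ to an oscillatory ${}_0F_1$).

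\textbf{Part (ii): vertex positivity.} For the sufficient direction I would first handle the two points of $\Lambda$ via the classical squared-Bessel identity
\[
J_\mu(z)^2 = \frac{(z/2)^{2\mu}}{\Gamma(\mu+1)^2}\, \F\!\left(\mu+\tfrac12;\; \mu+1,\; 2\mu+1;\; -z^2\right).
\]
Setting $\mu = a - \tfrac12$ and $z = x/2$ turns this into
\[
\F\!\left(a;\; a+\tfrac12,\; 2a;\; -\tfrac{x^2}{4}\right) = \Gamma\!\left(a+\tfrac12\right)^{\!2}\!\left(\tfrac{x}{4}\right)^{\!1-2a}\! J_{a-1/2}(x/2)^2 \;\ge\; 0,
\]
with equality only on the discrete zero set of $J_{a-1/2}$. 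The symmetry of $\F$ in its lower parameters gives the analogous identity at $(b, c) = (2a, a+\tfrac12)$, so both points of $\Lambda$ yield $\phi \ge 0$ but not strictly.

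\textbf{Propagation over $P_a$ and the main obstacle.} To carry the positivity across $P_a$ I would invoke the Euler-type shift
\[
\F(a; b, c; w) = \frac{\Gamma(c)}{\Gamma(c_0)\Gamma(c - c_0)} \int_0^1 s^{c_0 - 1}(1-s)^{c - c_0 - 1}\, \F(a; b, c_0; ws)\, ds, \qquad c > c_0 > 0,
\]
together with its analogue in $b$; these have strictly positive kernels, so they preserve $\phi \ge 0$ and in fact promote it to strict positivity as soon as the parameter is raised above the vertex value. Every $(b, c) \in P_a$ with $c \ge 2a$ lies componentwise above $(a+\tfrac12, 2a)$, and every $(b, c)$ with $b \ge 2a$ lies componentwise above $(2a, a+\tfrac12)$, so the two shifts cover both ``quadrant pieces'' of $P_a$ with strict positivity off $\Lambda$. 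The main obstacle is the remaining strip $\{a+\tfrac12 \le b, c \le 2a,\; b + c \ge 3a + \tfrac12\}$, where neither vertex dominates componentwise and the monotone shifts alone do not reach. For this region I would appeal to an additional positive integral representation of $\F(a; b, c; -x^2/4)$ along the sloped edge---for instance of Sonine/Macdonald type, written in terms of products $J_{b-1}J_{c-1}$---which would reduce nonnegativity on $\{b + c = 3a + \tfrac12\}$ to the two (equal) vertex Bessel-square identities already obtained; the Euler shifts then finish the interior of the strip.
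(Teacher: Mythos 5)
Your proposal correctly identifies the two vertex identities and the role of parameter-raising shifts, but it has two genuine gaps, one in each part.

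In part (i), the claim that nonnegativity of the algebraic coefficient $\Gamma(b)\Gamma(c)/[\Gamma(b-a)\Gamma(c-a)]$ forces $b>a$ and $c>a$ is not valid: $\Gamma$ alternates sign on the intervals $(-k-1,-k)$, so for instance $b-a\in(-2,-1)$ with $c>a$ makes the coefficient positive even though $b<a$. Your parenthetical only disposes of the exact degenerate case $b=a$. The paper instead \emph{lowers} the numerator parameter from $a$ down to $b$ using the monotone-shift lemma (Lemma \ref{lemmaA}), which would turn a nonnegative $\phi$ into a strictly positive $\,{}_0F_1(c;\cdot)=\J_{c-1}$; since $\J_{c-1}$ has infinitely many sign changes, $b\le a$ is impossible, and $c>a$ follows by symmetry. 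Only after that does the asymptotic expansion (with its now provably positive algebraic coefficient) yield $b+c\ge 3a+\tfrac12$. You need some such argument; the asymptotics alone do not close part (i).

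In part (ii), you have correctly located the hard region --- the strip along the sloped edge $b+c=3a+\tfrac12$ with $a+\tfrac12<b<2a$, which is not componentwise above either vertex --- but you do not actually prove anything there: the appeal to an unspecified ``Sonine/Macdonald type'' representation in terms of $J_{b-1}J_{c-1}$ is a placeholder, and no such product formula produces a single $\,\F$ with these parameters (a product $J_\mu J_\nu$ is generically a ${}_2F_3$). The paper resolves exactly this edge by Gasper's sums-of-squares expansion: on the line $c=3a+\tfrac12-b$ the relevant coefficient series becomes Saalsch\"utzian, so Saalsch\"utz's theorem evaluates the coefficients in closed form as
$$\frac{(2a-b)_n\,(b-a-\tfrac12)_n}{(b)_n\,(3a+\tfrac12-b)_n}\,,$$
which are positive precisely when $b$ lies strictly between $a+\tfrac12$ and $2a$; the shift lemma then fills in the rest of $P_a$, exactly as you propose for the parts you do cover. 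Until the sloped edge is handled by some concrete identity of this kind, the proof is incomplete.
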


\smallskip

For the sake of presenting this paper in a self-contained way,
we shall give a simplified proof in the appendix.
Keeping in mind that the line segment joining two point of $\Lambda$ is given by
$\,c= 3a + 1/2 -b\,$ in the $\,(b, c)$-plane, it is straightforward to obtain the
range for the positivity or nonnegativity of $\Phi$ by
implementing Lemma \ref{lemma1}.

\smallskip
\begin{theorem}\label{theorem1}
Let $\mathcal{R}$ be the set of parameters $(\beta, \alpha)$ defined by
$$\mathcal{R} = \left\{\beta>-1,\,\,-\beta-1<\alpha\le 0\right\}\cup\left\{\beta>0, \,\,0<\alpha\le\min \left(\beta, \,\frac 12\right)\right\}.$$
For $\,\mu>0,\,\lambda\ge 0\,$ and $\,(\beta, \alpha)\in\mathcal{R},\,$ we have
\begin{equation*}
\int_0^x \left(x^\mu-t^\mu\right)^\lambda t^\alpha J_\beta(t)dt>0 \qquad(x>0)
\end{equation*}
unless $\,\lambda =0, \,\alpha=\beta=1/2.\,$ In the exceptional case, it reduces to
\begin{equation*}
\int_0^x J_{\frac 12}(t)\sqrt t\,dt = \frac{2\sqrt 2}{\sqrt\pi}\,\sin^2\left(\frac x2\right)\ge 0.
\end{equation*}
\end{theorem}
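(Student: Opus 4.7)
The strategy is to exploit the reduction formulas (2.2)--(2.3), which tie the sign of the integral to that of the $\F$-kernel $\Phi$, and then appeal to Lemma \ref{lemma1}. Setting
\[
a = \frac{\alpha + \beta + 1}{2}, \qquad b = \beta + 1, \qquad c = a + 1,
\]
the standing assumptions $\beta > -1$ and $\alpha + \beta + 1 > 0$ give $a, b, c > 0$, so the lemma applies.

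The heart of the matter is to verify that the membership condition $(b, c) \in P_a$ carves out exactly the region $\mathcal{R}$. Since $c = a + 1$, the constraints on $c$ imposed by the Newton diagram are automatically satisfied, and the nontrivial conditions come from the lower bound on $b$ together with the diagonal $b + c \ge 3a + 1/2$; these translate into
\begin{align*}
b + c \ge 3a + \tfrac12 &\iff \alpha \le \tfrac12, \\
b \ge a + \tfrac12 &\iff \alpha \le \beta, \\
b \ge 2a &\iff \alpha \le 0.
\end{align*}
When $a \le 1/2$ (equivalently $\alpha + \beta \le 0$), the Newton diagram reads $\{b \ge 2a,\,c \ge 2a,\,b + c \ge 3a + 1/2\}$, so $(b, c) \in P_a$ reduces to $\alpha \le 0$; this recovers the strip $-\beta - 1 < \alpha \le 0$. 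When $a > 1/2$ the governing bound on $b$ becomes $b \ge a + 1/2$, and membership requires $\alpha \le \min(\beta, 1/2)$; this yields the second piece of $\mathcal{R}$.

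Next, the equality set $(b, c) \in \Lambda$ reduces to a single point: the candidate $(2a, a + 1/2)$ would force $c = a + 1 = a + 1/2$, which is impossible, while $(a + 1/2, 2a)$ forces $a = 1$ and $\beta = 1/2$, whence $\alpha = \beta = 1/2$. Thus Lemma \ref{lemma1} delivers $\Phi > 0$ on $(0, \infty)$ throughout $\mathcal{R}$ except at this lone point, where it still gives $\Phi \ge 0$.

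It remains to convert this into positivity of the integral. For $\lambda = 0$ and $(\beta, \alpha) \ne (1/2, 1/2)$, formula (2.2) transfers strict positivity directly; in the exceptional case the identity $J_{1/2}(t) = \sqrt{2/(\pi t)}\,\sin t$ yields the displayed closed form by a one-line computation. For $\lambda > 0$, formula (2.3) writes the integral as a positive multiple of $\int_0^1 \Phi(xt)(1 - t^\mu)^{\lambda - 1} t^{\mu + \alpha + \beta}\,dt$; the integrand is nonnegative throughout, and since $\Phi$ is continuous with $\Phi(0) = 1$, it is strictly positive on a neighborhood of $t = 0$, forcing the full integral to be positive. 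The only real obstacle is the mechanical but error-prone bookkeeping identifying $\mathcal{R}$ with the $P_a$-region; once that is done, everything else is routine.
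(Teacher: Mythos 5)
Your argument is correct and follows essentially the same route as the paper: apply Lemma \ref{lemma1} to the kernel $\Phi$ via the parameter identification $a=(\alpha+\beta+1)/2$ with denominator parameters $\beta+1$ and $a+1$, translate the Newton-diagram conditions (splitting at $a=1/2$), isolate the single exceptional point $\alpha=\beta=1/2$ from $\Lambda$, and transfer positivity through the representations (2.2)--(2.4). The only slight looseness is attributing the whole strip $-\beta-1<\alpha\le 0$ to the case $a\le 1/2$ (that case only yields $\alpha\le\min(0,-\beta)$, the rest coming from the case $a>1/2$), but the union of the two cases is exactly $\mathcal{R}$, so the conclusion stands.
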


\begin{figure}[!h]
 \centering
 \includegraphics[width=280pt, height=220pt]{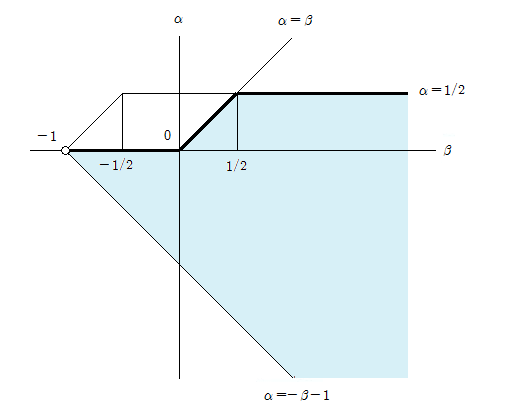}
 \caption{The positivity region in the unrestricted case $\mu>0,\,\lambda\ge 0.$}
\label{Fig2}
\end{figure}

\begin{remark} Geometrically, $\mathcal{R}$ represents an infinite polygonal region
depicted as in Figure \ref{Fig2}. In the case $\,\lambda=0,\,$ it follows from an inspection on Lemma \ref{lemma1} that
the necessity region for nonnegativity is given by
$\,\left\{\beta>-1, \,-\beta-1<\alpha\le 1/2,\,\alpha<\beta+1\right\}\,$
so that our result does not cover the parallelogram defined by
$\,\left\{0<\alpha\le 1/2,\,\beta<\alpha<\beta+1\right\}.$
\end{remark}

\begin{proof}
For the positivity of $\Phi$, we write
$\,A=(\alpha+\beta+1)/2\,$ and apply Lemma \ref{lemma1} with $\,a=A, \,b=A+1,\,c=\beta+1.\,$
For $\,0<A<1/2,\,$ $\Phi$ is positive when $\,\beta+1\ge 2A,\,$ that is,
$\,-\beta-1<\alpha<-\beta,\,\,\alpha\le 0.$
For $\,A\ge 1/2,\,$ $\Phi$ is positive when $\,\beta+1\ge \max(2A-1/2,\,A+1/2),\,$
that is, $\,-\beta\le\alpha\le \min(\beta,\,1/2).\,$
Combining, we obtain the stated region of positivity.
\end{proof}

In the special case $\,\mu=2,\,\lambda>0,\,$ positivity region $\mathcal{R}$ of Theorem \ref{theorem1} can be
improved considerably. As a matter of fact, if we observe
\begin{align}
\int_0^x \left(x^2-t^2\right)^\lambda t^\alpha J_\beta(t)dt =
\frac{B\left(\lambda+1, \frac{\alpha+\beta+1}{2}\right)\,x^{2\lambda+\alpha+\beta+1}}{2^{\beta+1}
\Gamma(\beta+1)}\,\Psi(x),
\end{align}
where $B$ stands for the Euler's beta function and
\begin{equation}
\Psi(x) = \F\left(\begin{array}{c} \frac{\alpha+\beta+1}{2}\\\beta+1,\,\lambda+1+\frac{\alpha+\beta+1}{2}
\end{array}\biggl| -\frac{\,x^2}{4}\right),
\end{equation}
then it is routine to deduce from Lemma \ref{lemma1} the following result.

\smallskip

\begin{theorem}\label{theorem2}
Let $\mathcal{S}$ be the set of parameters $(\beta, \alpha)$ defined by
$$\mathcal{S} = \left\{\beta>-1,\,\,-\beta-1<\alpha\le 0\right\}\cup
\left\{\beta>0,\,\,\alpha\le\min \left(\beta, \,\lambda+\frac 12\right)\right\}.$$
If $\,\lambda>0\,$ and $\,(\beta, \alpha)\in\mathcal{S},\,$ then
\begin{equation*}
\int_0^x \left(x^2-t^2\right)^\lambda t^\alpha J_\beta(t)dt>0 \qquad(x>0)
\end{equation*}
unless $\,\alpha=\beta=\lambda+1/2\,$ for which the integral reduces to
\begin{equation*}
\int_0^x \left(x^2-t^2\right)^\lambda t^{\lambda+\frac 12} J_{\lambda+\frac 12}(t)dt
=\frac{\sqrt\pi\,\Gamma(\lambda+1)(2x^2)^{\lambda+\frac 12}}{2} \,J_{\lambda+\frac 12}^2\left(\frac x2\right)\ge 0.
\end{equation*}
\end{theorem}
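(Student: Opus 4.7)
The plan is to apply Lemma~\ref{lemma1} to the kernel $\Psi$ via the integral identity already displayed, which reduces the problem to showing $\Psi(x)>0$ for $x>0$ (with equality only in the exceptional case). The prefactor $B(\lambda+1,(\alpha+\beta+1)/2)\,x^{2\lambda+\alpha+\beta+1}/[2^{\beta+1}\Gamma(\beta+1)]$ is strictly positive under the hypotheses $\lambda>0$, $\beta>-1$, and $\alpha+\beta+1>0$, so positivity of the integral is equivalent to positivity of $\Psi$.

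Set $a=A:=(\alpha+\beta+1)/2$, $b=\beta+1$, $c=\lambda+1+A$. The Newton diagram $P_A$ associated with $\Lambda=\{(A+1/2,2A),(2A,A+1/2)\}$ is exactly the set of $(b,c)$ satisfying $b\ge m_A$, $c\ge m_A$, and $b+c\ge 3A+1/2$, where $m_A=\min(A+1/2,2A)$. Because $\lambda>0$ gives $c>A+1/2\ge m_A$, the $c$-constraint is automatic, so I only need the remaining two. I split on the size of $A$: when $A\le 1/2$, $m_A=2A$, and $b\ge m_A$ becomes $\alpha\le 0$, while the sum-constraint is implied by $\lambda>0$; when $A>1/2$, $m_A=A+1/2$, and $b\ge m_A$ becomes $\beta\ge\alpha$, while $b+c\ge 3A+1/2$ simplifies to $\alpha\le\lambda+1/2$. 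Combining the two regimes with the standing convergence condition $\alpha+\beta+1>0$ reproduces exactly the region $\mathcal{S}$ described in the theorem.

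For the exceptional case, Lemma~\ref{lemma1}(ii) tells me that strict positivity of $\Psi$ fails only if $(b,c)\in\Lambda$. The option $(b,c)=(2A,A+1/2)$ forces $\lambda+1+A=A+1/2$, i.e., $\lambda=-1/2$, which contradicts $\lambda>0$; so the only possibility is $(b,c)=(A+1/2,2A)$, which simultaneously yields $\beta=\alpha$ and $A=\lambda+1$, hence $\alpha=\beta=\lambda+1/2$. In that case the claimed closed-form identity should follow by expanding $J_{\lambda+1/2}(t)$ in its ${}_0F_1$-series, evaluating the resulting beta-type integrals termwise, and recognizing the sum as $(x/2)^{2\lambda+1}/\Gamma(\lambda+3/2)^2$ times the hypergeometric square representation of $J_{\lambda+1/2}^2(x/2)$; equivalently, one can invoke Sonine's second finite integral directly.

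The main obstacle I anticipate is not the hypergeometric positivity step, which is a direct invocation of Lemma~\ref{lemma1}, but rather the bookkeeping: matching the piecewise case-split on $A$ cleanly against the two-piece description of $\mathcal{S}$, and carrying out the algebraic identification in the exceptional case. The rest is routine, exactly parallel to the proof of Theorem~\ref{theorem1}.
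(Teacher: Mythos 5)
Your proposal is correct and follows exactly the route the paper intends: the paper itself only remarks that Theorem \ref{theorem2} "is routine to deduce from Lemma \ref{lemma1}" via the kernel $\Psi$ with $a=(\alpha+\beta+1)/2$, $b=\beta+1$, $c=\lambda+1+a$, and your case-split on $A\lessgtr 1/2$, the observation that the $c$-constraint and (for $A\le 1/2$) the sum-constraint are automatic when $\lambda>0$, and the identification of the unique exceptional point $(b,c)=(A+1/2,2A)$ giving $\alpha=\beta=\lambda+1/2$ are precisely that routine deduction, carried out correctly. The closed form in the exceptional case is the squared-Bessel identity ${}_1F_2(a;a+\tfrac12,2a;-x^2/4)=\J_{a-1/2}^2(x/2)$ recorded in the appendix, as you indicate.
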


\smallskip

\begin{remark}
In \cite{Ga1}, Gasper also obtained a positivity region in this case.
Our result, however, is an improvement in that
the triangle with vertices $\,(0, \,0),\,(\lambda+1/2,\,0),\,(\lambda+1/2,\,\lambda+1/2)\,$ is missing in Gasper's positivity region.
\end{remark}

\section{Positivity of $_2F_3$ hypergeometric functions}
While Newton diagrams give positivity regions of $\F$ hypergeometric functions,
it appears that there are no such criteria available for $_2F_3$ hypergeometric functions.
Our purpose here is to develop some criteria of positivity, which will be exploited later on.

As usual, we shall use Pochhammer's notation to denote
$$(\alpha)_k = \alpha(\alpha+1)\cdots (\alpha+k-1),\quad (\alpha)_0 =1$$
for any real number $\alpha$ and positive integer $k$. We refer to Bailey \cite{Ba}, Luke \cite{L} for
definitions and basic properties of generalized hypergeometric functions.

A basic feature on positivity is the following.

\smallskip

\begin{lemma}\label{lemma3}
For positive real numbers $\,a, b, c, d, e,\,$ suppose that
\begin{equation*}
{}_2F_{3} \left(\begin{array}{c} a, \,b \\
c,\,d,\,e \end{array}\biggl| -\frac{\,x^2}{4}\right)
>0\qquad(x>0).
\end{equation*}
Then for any $\,\delta\ge 0,\,\gamma\ge 0,\,\epsilon\ge 0,\,$ we also have
\begin{equation*}
{}_2F_{3} \left(\begin{array}{c} a, \,b \\
c+\delta,\,d+\gamma,\,e+\epsilon \end{array}\biggl| -\frac{\,x^2}{4}\right)
>0\qquad(x>0).
\end{equation*}
\end{lemma}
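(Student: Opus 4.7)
The strategy is the classical one of representing a shift in a denominator parameter as a beta-type fractional integration against the original hypergeometric function. For $c>0$ and $\delta>0$, the beta integral yields the identity
\[
\frac{1}{(c+\delta)_k} \;=\; \frac{\Gamma(c+\delta)}{\Gamma(c)\,\Gamma(\delta)}\cdot\frac{1}{(c)_k}\int_0^1 s^{c+k-1}(1-s)^{\delta-1}\,ds,
\]
which follows immediately from $B(c+k,\,\delta) = \Gamma(c+k)\Gamma(\delta)/\Gamma(c+\delta+k)$ after cancelling factors of $\Gamma(c+k)/\Gamma(c)$ and $\Gamma(c+\delta)/\Gamma(c+\delta+k)$.

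I would insert this identity into the power series defining ${}_2F_3(a,b;c+\delta,d,e;-x^2/4)$ and interchange summation and integration, which is legitimate since the series converges absolutely and locally uniformly on $(0,\infty)$ and the beta weight is integrable on $(0,1)$. The factor $s^k$ then combines with $(-x^2/4)^k$ to produce $(-sx^2/4)^k$, so that
\[
{}_2F_{3}\!\left(\begin{array}{c}a,\,b\\ c+\delta,\,d,\,e\end{array}\biggl|\,-\frac{\,x^2}{4}\right)
= \frac{\Gamma(c+\delta)}{\Gamma(c)\,\Gamma(\delta)}\int_0^1 s^{c-1}(1-s)^{\delta-1}\,{}_2F_{3}\!\left(\begin{array}{c}a,\,b\\ c,\,d,\,e\end{array}\biggl|\,-\frac{\,s\,x^2}{4}\right)ds.
\]
For $s\in(0,1)$ the argument $-sx^2/4$ equals $-(\sqrt{s}\,x)^2/4$ with $\sqrt{s}\,x>0$, so the hypothesis makes the integrand strictly positive and the prefactor is positive, proving positivity after shifting $c\mapsto c+\delta$. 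The case $\delta=0$ is trivial. Applying the same reasoning in turn to $d\mapsto d+\gamma$ and $e\mapsto e+\epsilon$ (using the identity with $d$, and then $e$, in the role of $c$) completes the argument.

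I do not anticipate a genuine obstacle: the only delicate point is justifying the termwise integration, which is routine from absolute convergence of the ${}_2F_3$ series on compacts of $(0,\infty)$ together with integrability of the beta kernel $s^{c-1}(1-s)^{\delta-1}$ for $c,\delta>0$. Everything else is a manipulation of Pochhammer symbols through the Euler beta integral.
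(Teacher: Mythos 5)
Your proof is correct and is essentially identical to the paper's: the paper uses the same beta-integral representation for the shifted denominator parameter, written with the substitution $s=t^2$ (so the kernel appears as $(1-t^2)^{\delta-1}t^{2c-1}$ with prefactor $2/B(c,\delta)$), and likewise iterates over the remaining parameters. No gaps.
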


\begin{proof}
Assuming $\,\delta>0,\,$ we have
\begin{align*}
&{}_2F_{3} \left(\begin{array}{c} a, \,b \\
c+\delta,\,d,\,e \end{array}\biggl| -\frac{\,x^2}{4}\right)\\
&= \frac{2}{B(c, \delta)} \int_0^1 {}_2F_{3} \left(\begin{array}{c} a, \,b \\
c,\,d,\,e \end{array}\biggl| -\frac{\,x^2t^2}{4}\right) (1-t^2)^{\delta-1} t^{2c-1}\,dt>0
\end{align*}
and the other cases follow in the same manner or by symmetry.
\end{proof}

\smallskip

We next deal with $_2F_3$ hypergeometric functions of the form
\begin{align}\label{W1}
\Omega(x) &={}_2F_{3} \left(\begin{array}{c} a, \,a+\frac 12\\
c+1,\,a+b,\,a+b + \frac 12\end{array}\biggl| -\frac{\,x^2}{4}\right)\nonumber\\
&= \frac{1}{B(2a,\,2b)}\int_0^1
{}_0F_1\left(c+1\,;-\frac{\,x^2t^2}{4}\right)(1- t)^{2b-1} t^{2a-1}\, dt
\end{align}
with parameters satisfying $\,a>0,\,b>0,\,c>-1.$

We apply Gasper's
{\it sums of squares formula} (\cite{Ga1}, (3.1)) to write
\begin{align}\label{W2}
\Omega(x) = \Gamma^2(\nu+1)\left( \frac{x}{4}\right)^{-2\nu}\sum_{n=0}^{\infty} C(n, \nu)\frac{(2n+2\nu)}{n+2\nu}\frac{(2\nu+1)_n}{n!}
J_{\nu+n}^2\left( \frac{x}{2}\right)
\end{align}
in which $C(n, \nu)$ denotes the terminating series defined by
\begin{equation}\label{W3}
C(n, \nu)= {}_5F_{4} \left(\begin{array}{c} -n,\,n+2\nu,\,\nu+ 1,\,a, \,a+\frac 12\\
\nu+\frac 12,\,c+1,\,a+b,\,a+b + \frac 12\end{array}\biggl| 1\right)
\end{equation}
and $\nu$ is an arbitrary real number such that $2\nu$ is not a negative integer.

Due to the interlacing property on the zeros of Bessel functions $\,J_\nu,\,J_{\nu+1}\,$
(see Watson \cite{Wa}), the positivity of $\Omega$ would follow instantly from formula \eqref{W2} if
$\,C(n, \nu)>0\,$ for all nonnegative integers $n$ and $\,\nu>-1/2.$

Our investigation on the sign of $C(n, \nu)$ will be carried out along the following steps.
We recall that a ${}_pF_q$ generalized hypergeometric function is said to be {\it  Saalsch\"utzian} if the
sum of numerator parameters plus one is equal to the sum of denominator parameters.

\paragraph{Step 1.} We choose $\,\nu>-1/2\,$ in such a unique way that each coefficient $C(n, \nu)$ becomes
a Saalsch\"utzian terminating series, that is,
\begin{equation}\label{W4}
\nu = b + \frac c2 -\frac 14\quad\text{with}\quad b+ \frac c2  + \frac 14>0.
\end{equation}

\paragraph{Step 2.} In \cite{Ga2}, Gasper discovered a summation formula which states
\begin{align*}
&\quad{}_{p+2}F_{p+1} \left(\begin{array}{c} -n,\,a_1,\,\cdots,\,a_{p+1}\\
b_1,\,\cdots,\,b_{p+1}\end{array}\biggl| 1\right)\\
&\quad =\sum_{k=0}^n \binom nk\frac{(b_1+b_2-a_1-1)_k(b_1-a_1)_k(b_2-a_1)_k}{(b_1+b_2-a_1-1)_{2k}}
\frac{(a_2)_k\cdots(a_{p+1})_k}{(b_1)_k\cdots(b_{p+1})_k}\,\\
&\qquad\times\,\,
{}_{p+1}F_{p} \left(\begin{array}{c} k-n,\,k+a_2,\,\cdots,\,k+a_{p+1}\\
2k+b_1+b_2-a_1,\,k+b_3,\cdots,\,k+b_{p+1}\end{array}\biggl| 1\right).
\end{align*}

An application of this formula gives
\begin{align}\label{W5}
C(n, \nu) &= {}_5F_{4} \biggl(\begin{array}{c} -n,\,n+2b + c-\frac 12,\,
b+ \frac c2 + \frac 34,\,a, \,a+\frac 12\\
b+ \frac c2 + \frac 14,\,c+1,\,a+b,\,a+b + \frac 12\end{array}\biggl| 1\biggr)\nonumber\\
&=\sum_{k=0}^n \binom nk\frac{\left(2a+b-\frac c2-\frac{5}{4}\right)_k\left(a -\frac{c}{2}-\frac{3}{4}\right)_k\left(a-\frac{c}{2}-\frac{1}{4}\right)_k}
{\left(2a + b -\frac{c}{2}-\frac{5}{4}\right)_{2k}}\nonumber\\
&\times\,\,\frac{\left(n+2b+c-\frac{1}{2}\right)_k(a)_k\left(a+\frac{1}{2}\right)_k}
{(a+b)_k(c+1)_k\left(a+b+\frac{1}{2}\right)_k\left(b+\frac{c}{2}+\frac{1}{4}\right)_k}\,A_k(a, b, c),
\end{align}
where $\,A_k(a, b, c)\,$ denotes the Saalsch\"utzian series defined as
\begin{align*}
A_k(a, b, c) = {}_{4}F_{3} \left(\begin{array}{c}
k-n,\,k+n+2b + c-\frac{1}{2},\,k+a,\,k+a+\frac{1}{2}\\
2k+ 2a + b -\frac{c}{2}-\frac{1}{4},\,k+b+ \frac{c}{2}+\frac{1}{4},\,k+c+1\end{array}\biggl| 1\right).
\end{align*}

\paragraph{Step 3.} We next apply Whipple's transformation formula (Bailey \cite{Ba}, 7.2(1)),
\begin{align*}
{}_{4}F_{3} \left(\begin{array}{c}
-m,\,x,\,y,\,z\\
u,\,v,\,w\end{array}\biggl| 1\right) &= \frac{(v-z)_m(w-z)_m}{(v)_m(w)_m}\\
&\times\,\,{}_{4}F_{3} \left(\begin{array}{c}
-m,\,u-x,\,u-y,\,z\\
1-v+z-m,\,1-w+z-m,\,u\end{array}\biggl| 1\right),
\end{align*}
valid if it is Saalsch\"utzian, to decompose further
\begin{align}\label{W6}
&A_k(a, b, c) = \frac{\left(b+\frac{c}{2}+\frac{1}{4}-a\right)_{n-k}(c+1-a)_{n-k}}{\left(k+b+ \frac{c}{2}+\frac{1}{4}\right)_{n-k}(k+c+1)_{n-k}}\,\times\nonumber\\
&{}_{4}F_{3} \left(\begin{array}{c}
k-n,\,k-n+2a-b -\frac{3c}{2} +\frac{1}{4},\,k+a+b -\frac{c}{2}-\frac{3}{4},\,k+a\\
k-n+a-b-\frac{c}{2}+\frac{3}{4},\,k-n + a-c,\,2k+2a+b-\frac{c}{2}-\frac{1}{4}
\end{array}\biggl| 1\right).
\end{align}

\paragraph{Step 4.} From expansion formula \eqref{W5}, it is evident $\,C(n, \nu)>0\,$ if
\begin{equation}\label{W6-1}
2a> -b +\frac c2 +\frac 54,\,\, a \ge\frac{c}{2} +\frac{3}{4},\,
\, b+\frac{c}{2}+\frac{1}{4}>0
\end{equation}
and $\,A_k(a, b, c)>0\,$ for all $k$. By using an elementary inequality
$$\frac{(-m)_j (-m+\alpha)_j}{(-m+\beta)_j(-m+\gamma)_j}>0,\quad j=0, 1,\cdots, m,$$
valid as long as $\,\alpha\le 1,\,\beta<1,\,\gamma<1,\,$ we deduce from \eqref{W6}
$\,A_k(a, b, c)>0\,$ when
\begin{equation}\label{W6-2}
2a\le b +\frac{3c}{2} +\frac{3}{4},\, \,a<b+\frac{c}{2}+\frac{1}{4},\, \,a<c+1.
\end{equation}

\medskip

Combining \eqref{W6-1}, \eqref{W6-2}, we may summarize
what we have proved as follows.

\smallskip

\begin{theorem}\label{theorem3}
For $\,a>0,\,b>0,\,c>-1,\,$ we have
\begin{equation}\label{W8}
\Omega(x)={}_2F_{3} \left(\begin{array}{c} a, \,a+\frac 12\\
c+1,\,a+b,\,a+b + \frac 12\end{array}\biggl| -\frac{\,x^2}{4}\right)>0
\qquad(x>0)
\end{equation}
if $\,a, b, c\,$ satisfy the following conditions simultaneously.
\begin{align*}
\left\{\begin{aligned} &{\,\,\frac c2 + \frac 34\le a <\min\left(c+1,\,b+\frac c2 + \frac 14\right),}\\
&{\,\,-b+\frac c2 +\frac 54<2a \le b+\frac{3c}{2} + \frac 34\,.}\end{aligned}\right.
\end{align*}
\begin{itemize}
\item[\rm(i)] In the boundary case $\,a= b+\frac c2 + \frac 14\,,$ \eqref{W8} also holds true if
\begin{equation*}
\frac 12< b\le\frac c2 + \frac 14\,,\,\,c>\frac 12\,.
\end{equation*}
\item[\rm(ii)] In the boundary case $\,c=a-1,\,$ \eqref{W8} also holds true if
\begin{equation*}
b \ge \max\left[1,\,\frac 12\left(a+\frac 32\right)\right],\,\,(a, b) \ne \left(\frac 12,\,1\right).
\end{equation*}
In the case $\,(a, b)=\left(\frac 12,\,1\right),\,$ we have $\,\Omega(x)\ge 0\,$ for $\,x>0.$
\end{itemize}
\end{theorem}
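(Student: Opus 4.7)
The plan is to use the Gasper sums-of-squares expansion \eqref{W2} together with the interlacing of positive zeros of consecutive Bessel functions $J_{\nu+n}$ and $J_{\nu+n+1}$ (Watson \cite{Wa}): under that interlacing, the right-hand side of \eqref{W2} is strictly positive for all $x>0$ as soon as every coefficient $C(n,\nu)$ is positive. The first body of the theorem is then obtained by intersecting the positivity conditions \eqref{W6-1} on the prefactor in \eqref{W5} with the positivity conditions \eqref{W6-2} on $A_k(a,b,c)$ derived from the Whipple-transformed form \eqref{W6}; this intersection is exactly the stated hypothesis.

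For boundary case (i), where $a=b+c/2+1/4$, I would observe that the Pochhammer factor $\left(b+c/2+1/4-a\right)_{n-k}=(0)_{n-k}$ sitting in the numerator of \eqref{W6} vanishes for $k<n$ and equals $1$ for $k=n$, so the sum \eqref{W5} collapses to its $k=n$ term; at $k=n$ the inner ${}_4F_3$ likewise reduces to $1$ because its leading parameter $k-n$ vanishes. What is left is a finite ratio of Pochhammer symbols, to be analysed factor by factor: the decisive one is $(b-1/2)_n$, positive precisely under $b>1/2$, while the upper bound $b\le c/2+1/4$ (and the derived $c>1/2$) come from the main-theorem conditions $2a\le b+3c/2+3/4$ and $a<c+1$ read at the boundary $a=b+c/2+1/4$.

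For boundary case (ii), $c=a-1$ causes the numerator parameter $a$ to cancel against the denominator parameter $c+1$, reducing $\Omega$ to a ${}_1F_2$:
\[
\Omega(x)=\F\!\left(\begin{array}{c} a+\tfrac12 \\ a+b,\,a+b+\tfrac12 \end{array}\biggl|-\frac{x^2}{4}\right).
\]
Now Lemma \ref{lemma1} applies directly with its parameters taken as $(a+1/2,\,a+b,\,a+b+1/2)$; the Newton-diagram condition $(a+b,\,a+b+1/2)\in P_{a+1/2}$ unfolds by elementary arithmetic into $b\ge\max(1,\,(a+3/2)/2)$, matching the stated hypothesis. Of the two vertices of $\Lambda$, only $(a+1,\,2a+1)$ is reachable for $a>0$, and it corresponds exactly to $(a,b)=(1/2,1)$; at that vertex Lemma \ref{lemma1} only furnishes $\Omega\ge 0$, the stated exceptional conclusion.

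The main obstacle I anticipate is boundary case (i): once \eqref{W5} collapses to a single term, the surviving ratio of Pochhammer symbols must be shown positive uniformly in $n\ge 0$, and the interplay between the strict bound $b>1/2$ (driving $(b-1/2)_n>0$) and the remaining inequalities (ensuring that the denominator Pochhammers never cross zero at the boundary parameter combination) has to be traced carefully.
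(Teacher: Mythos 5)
Your argument for the main body of the theorem and for boundary case (ii) is essentially the paper's: the unrestricted conditions are exactly the intersection of \eqref{W6-1} and \eqref{W6-2} fed into Gasper's expansion \eqref{W2}, and in case (ii) the cancellation of the numerator parameter $a$ against $c+1$ reduces $\Omega$ to a ${}_1F_2$ to which Lemma \ref{lemma1} applies, with the Newton-diagram condition unfolding to $b\ge\max\bigl(1,\tfrac12(a+\tfrac32)\bigr)$ and the single reachable vertex of $\Lambda$ giving $(a,b)=(\tfrac12,1)$ and $\Omega\ge 0$ there. All of that is correct.

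Boundary case (i) is where your proof has a genuine gap. You claim that at $a=b+\tfrac c2+\tfrac14$ the factor $\bigl(b+\tfrac c2+\tfrac14-a\bigr)_{n-k}=(0)_{n-k}$ in \eqref{W6} annihilates $A_k$ for every $k<n$, so that \eqref{W5} collapses to its $k=n$ term. This does not follow: at the same parameter value the first denominator parameter of the transformed ${}_4F_3$ in \eqref{W6}, namely $k-n+a-b-\tfrac c2+\tfrac34$, becomes $k-n+1$, a non-positive integer, so the top term $j=n-k$ of that terminating series carries $(k-n+1)_{n-k}=0$ in its denominator while its numerator factor $(k-n)_{n-k}\ne 0$. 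The right-hand side of \eqref{W6} is thus a $0\cdot\infty$ indeterminate form at the boundary, and $A_k$ equals its (generically nonzero) limit, not $0$. Concretely, for $n=1$, $k=0$ the original definition gives, at $a=b+\tfrac c2+\tfrac14$,
$A_0=1-\frac{(2b+c+\frac12)(b+\frac c2+\frac34)}{(3b+\frac c2+\frac14)(c+1)}$,
which for the admissible choice $b=1$, $c=2$ equals $1-\frac{4.5\cdot 2.75}{4.25\cdot 3}\ne 0$. So the sum does not collapse, and your single-term analysis establishes nothing about $C(n,\nu)$. The paper's route is different: it applies Whipple's transformation directly to the ${}_5F_4$ defining $C(n,\nu)$ (rather than to the inner $A_k$), obtaining $C(n,\nu)$ as $\frac{(b-\frac12)_n(b)_n}{(2b+\frac c2+\frac14)_n(2b+\frac c2+\frac34)_n}$ times a terminating ${}_4F_3$ whose positivity for $\tfrac12<b<\tfrac c2+\tfrac14$ follows from the same elementary Pochhammer inequality as in Step 4, and it then treats the endpoint $b=\tfrac c2+\tfrac14$ separately, where $\Omega$ degenerates to a ${}_1F_2$ covered by Lemma \ref{lemma1}. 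You would need to replace your collapse argument with something along these lines.
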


\begin{proof}
It remains to prove positivity in the two boundary cases.

(i)We apply Whipple's
transformation formula of Step 3 directly to obtain
\begin{align*}
C(n, \nu) &= \frac{\left(b-\frac 12\right)_n (b)_n}{\left(2b+\frac c2+\frac 14\right)_n \left(2b+\frac c2+\frac 34\right)_n}\\
&\quad\times\,\,{}_{4}F_{3} \left(\begin{array}{c}
-n,\,-n -2b + \frac 32,\,-b+ \frac c2+\frac 14,\,b+\frac c2+\frac 34\\
-n-b + \frac 32,\,-n -b+1,\,c+1
\end{array}\biggl| 1\right),
\end{align*}
which is easily seen to be positive under the former condition by the same reasonings as in Step 4.
If $\,b = c/2 + 1/4,\,a= c+1/2,\,$ then $\Omega$ reduces to
$$\Omega(x)={}_1F_{2} \left(\begin{array}{c} c+\frac 12\\
\frac 32\left(c+\frac 12\right),\,\frac 32\left(c+\frac 12\right) +\frac 12\end{array}
\biggl| -\frac{\,x^2}{4}\right)$$
and positivity with $\,c>1/2\,$ follows by Lemma \ref{lemma1} (see also Fields and Ismail \cite{FI}).

(ii) In this case, it is easy to deduce again from Lemma \ref{lemma1}
$$\Omega(x)={}_1F_{2} \left(\begin{array}{c} a+\frac 12\\
a+b,\,a+b + \frac 12\end{array}\biggl| -\frac{\,x^2}{4}\right)>0$$
under the stated condition.
In the case $\,(a, b)=\left(\frac 12,\,1\right),\,$ $\Omega$ reduces to
$$\Omega(x) = \left[\frac{\sin(x/2)}{x/2}\right]^2\ge 0.$$
\end{proof}

In the special case $\,b=1,\,$ we obtain

\smallskip

\begin{corollary}\label{corollary1}
For $\,a>0,\,c>-1,\,$ we have
\begin{equation}\label{W9}
{}_2F_{3} \left(\begin{array}{c} a, \,a+\frac 12\\
c+1,\,a+1,\,a+ \frac 32\end{array}\biggl| -\frac{\,x^2}{4}\right)>0
\qquad(x>0)
\end{equation}
if $\,a, c\,$ satisfy one of the following conditions.
\begin{align*}
&{\rm(i)}\quad \left\{\begin{aligned} &{\,\,\frac c2 + \frac 34\le a<\min\left(c+1,\,\frac c2 + \frac 54\right),}\\
&{\,\,\frac c2 +\frac 14<2a\le \frac{3c}{2} + \frac 74\,.}\end{aligned}\right.\\
&{\rm(ii)}\quad a= \frac c2 + \frac 54\,,\,\,c\ge \frac 32\,.\quad {\rm(iii)}\quad c= a-1,\,\,0<a\le \frac 12\,.
\end{align*}
\end{corollary}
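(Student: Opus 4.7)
The plan is straightforward: the corollary is simply the specialization $b=1$ of Theorem \ref{theorem3}, and the three cases (i)--(iii) match, in order, the main body of that theorem together with its two boundary cases. So the proof is essentially a symbolic substitution, and the only task is to verify that each set of inequalities collapses correctly under $b=1$.

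For case (i), I would put $b=1$ into the main double inequality of Theorem \ref{theorem3}, namely $\tfrac c2 + \tfrac 34 \le a < \min(c+1,\,b + \tfrac c2 + \tfrac 14)$ and $-b + \tfrac c2 + \tfrac 54 < 2a \le b + \tfrac{3c}{2} + \tfrac 34$. The first reduces to $\tfrac c2 + \tfrac 34 \le a < \min(c+1,\,\tfrac c2 + \tfrac 54)$ and the second to $\tfrac c2 + \tfrac 14 < 2a \le \tfrac{3c}{2} + \tfrac 74$, which are exactly the inequalities stated in (i). The $_2F_3$ in \eqref{W9} is precisely $\Omega(x)$ from \eqref{W1} with $b=1$, so positivity is inherited directly from Theorem \ref{theorem3}.

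For case (ii), I would invoke boundary case (i) of Theorem \ref{theorem3}, namely $a = b + \tfrac c2 + \tfrac 14$, which at $b=1$ becomes $a = \tfrac c2 + \tfrac 54$ as required. The auxiliary requirement $\tfrac 12 < b \le \tfrac c2 + \tfrac 14$ with $c > \tfrac 12$ collapses to the single condition $c \ge \tfrac 32$, matching the corollary verbatim. For case (iii), I would invoke boundary case (ii) of Theorem \ref{theorem3}, namely $c = a - 1$; the constraint $b \ge \max[1,\,\tfrac 12(a + \tfrac 32)]$ at $b=1$ forces $a + \tfrac 32 \le 2$, i.e.\ $a \le \tfrac 12$, again recovering the stated range.

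Because the argument is pure substitution into an already-proved theorem, there is no substantive obstacle. The one subtle point to flag is the endpoint $a = \tfrac 12$ of case (iii): this is exactly the exceptional pair $(a,b)=(\tfrac 12,1)$ of Theorem \ref{theorem3}, where one has only $\Omega(x) = [\sin(x/2)/(x/2)]^2 \ge 0$ rather than strict positivity, so the ``$>0$'' in \eqref{W9} at that single boundary point must be interpreted as ``$\ge 0$''.
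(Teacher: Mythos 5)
Your proposal is correct and is essentially the paper's own (implicit) proof: the corollary is stated as the immediate specialization $b=1$ of Theorem \ref{theorem3}, and your substitutions into the main condition and the two boundary cases all check out. Your flag about the endpoint $a=\tfrac 12$ in case (iii) is a valid catch --- there $(a,b)=(\tfrac 12,1)$ is precisely the exceptional pair of Theorem \ref{theorem3}, where $\Omega(x)=[\sin(x/2)/(x/2)]^2$ vanishes at $x=2\pi k$, so the corollary's strict inequality should indeed be read as $\ge 0$ at that single point.
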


\section{Improved results of Misiewicz and Richards}
In the case $\,0<\mu\le 1\le\lambda,\,$ the density
$\,t\mapsto (1-t^\mu)_+^\lambda\,$ is convex and non-increasing on $(0, \infty).$
By using Williamson's characterization \cite{Wi} on such monotone convex functions,
Misiewicz and Richards \cite{MR} observed
\begin{align}\label{M0}
&\int_0^x \left(x^\mu -t^\mu\right)^\lambda t^\alpha J_\beta(t) dt = x^{\mu\lambda-1}\int_0^1 K(xt)\,dG(t),\nonumber\\
&\qquad\qquad K(x)=\int_0^x (x-t) t^\alpha J_\beta(t) dt,
\end{align}
with a unique probability measure $G$,
so that the positivity of \eqref{A1} under consideration
would follow once kernel $K$ were shown to be nonnegative.

In view of the well-known Bessel identity (Watson \cite{Wa})
\begin{equation}\label{J1}
J_\beta(x) = \frac{1}{\Gamma(\beta+1)}\left(\frac x2\right)^\beta\,{}_0F_1\left(\beta+1\,; -\frac{\,x^2}{4}\right),\quad\beta>-1,
\end{equation}
it is simple to modify \eqref{W1} to evaluate
\begin{align}\label{M2}
K(x) = \frac{B(\alpha+\beta+1, 2) x^{\alpha+\beta+2}}{2^\beta\,\Gamma(\beta+1)}\,
{}_2F_{3} \left(\begin{array}{c} \frac{\alpha+\beta+1}{2}\,, \,\frac{\alpha+\beta+2}{2}\\
\beta +1,\,\frac{\alpha+\beta+3}{2},\,\frac{\alpha+\beta+4}{2}\end{array}\biggl| -\frac{\,x^2}{4}\right)
\end{align}
and hence the problem of positivity reduces to the nonnegativity question on the ${}_2F_3$ hypergeometric functions
defined in \eqref{M2}.

Our improvement of Theorem A reads as follows.

\smallskip

\begin{theorem}\label{theorem4}
Let $\mathcal{P}$ be the set of parameters $(\beta, \alpha)$ defined by
\begin{align*}
\mathcal{P} = \left\{\beta>-1,\,\,-\beta-1<\alpha\le\min \left[
\beta+1,\, \frac 12\left(\beta+\frac 32\right), \,\frac 32\right]\right\}.
\end{align*}
For $\,0<\mu\le 1\le\lambda\,$ and $\,(\beta, \alpha)\in\mathcal{P},\,$ we have
\begin{equation*}
\int_0^x \left(x^\mu-t^\mu\right)^\lambda t^\alpha J_\beta(t)dt>0 \qquad(x>0).
\end{equation*}
\end{theorem}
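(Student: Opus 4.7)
The plan is to follow the Misiewicz-Richards reduction and then combine Corollary \ref{corollary1} with Lemma \ref{lemma3} to cover the entire region $\mathcal{P}$. Concretely, for $0<\mu\le 1\le\lambda$, identity \eqref{M0} expresses the integral as $x^{\mu\lambda-1}\int_0^1 K(xt)\,dG(t)$ against a probability measure $G$, so it suffices to show $K(x)\ge 0$. By \eqref{M2} this reduces to nonnegativity of
\[
\Omega(x)={}_2F_3\left(\begin{array}{c} \frac{\alpha+\beta+1}{2},\,\frac{\alpha+\beta+2}{2}\\ \beta+1,\,\frac{\alpha+\beta+3}{2},\,\frac{\alpha+\beta+4}{2}\end{array}\biggl|-\frac{x^2}{4}\right),
\]
which has exactly the form required to apply Corollary \ref{corollary1} with $a=(\alpha+\beta+1)/2$ and $c=\beta$.

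The second step is to apply Corollary \ref{corollary1} on the upper boundary of $\mathcal{P}$, namely the concave piecewise-linear curve $\alpha=\min[\beta+1,\,\tfrac12(\beta+\tfrac32),\,\tfrac32]$, which decomposes into three segments: $\alpha=\beta+1$ for $-1<\beta\le -1/2$; $\alpha=\tfrac12(\beta+\tfrac32)$ for $-1/2\le\beta\le 3/2$; and $\alpha=3/2$ for $\beta\ge 3/2$. Substituting each boundary relation into the parameter inequalities of Corollary \ref{corollary1} shows that these segments fall, respectively, into case (iii), case (i) at the saturated equality $2a=\tfrac{3c}{2}+\tfrac74$, and case (ii). This yields strict positivity $\Omega(x)>0$ throughout the upper boundary, with the sole exception of the corner $(\beta,\alpha)=(-1/2,1/2)$, where case (iii) degenerates to $\Omega(x)=[\sin(x/2)/(x/2)]^2\ge 0$; at that corner the integrand becomes $\sqrt{2/\pi}\cos t$ and the original integral reduces to Kuttner's, so Theorem B applies directly.

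For the interior of $\mathcal{P}$, I would propagate positivity via Lemma \ref{lemma3}. The key observation is that, setting $s=\alpha+\beta$, both numerator parameters $(s+1)/2,\,(s+2)/2$ and two of the three denominators $(s+3)/2,\,(s+4)/2$ of $\Omega$ depend only on $s$; only the remaining denominator $\beta+1$ varies along a line of constant $s$. Since the upper boundary of $\mathcal{P}$ is concave with slopes in $[0,1]$ while the lines of constant $s$ have slope $-1$ in the $(\beta,\alpha)$-plane, every interior point $(\beta_1,\alpha_1)\in\mathcal{P}$ lies on a line $\alpha+\beta=s_1>-1$ whose unique intersection $(\beta_0,\alpha_0)$ with the upper boundary satisfies $\beta_0<\beta_1$. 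Applying Lemma \ref{lemma3} with the positive increment $\delta=\beta_1-\beta_0$ added to the denominator $\beta_0+1$ then transports the (possibly nonstrict) positivity of $\Omega$ at the boundary point to strict positivity at the interior point, since the integral representation used in the proof of Lemma \ref{lemma3} gives strict positivity whenever the integrand is nonnegative and not almost everywhere zero; the Williamson representation then yields strict positivity of the original integral.

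The main obstacle is the somewhat delicate bookkeeping required to match each of the three boundary segments to a subcase of Corollary \ref{corollary1}, in particular at the two corners where adjacent segments meet and where strict inequalities in Corollary \ref{corollary1} can collapse to equalities. The genuinely exceptional corner $(-1/2,1/2)$, at which $\Omega$ vanishes at the infinitely many points $x=2\pi k$, cannot be handled by Corollary \ref{corollary1} in the strict sense and must be resolved by a separate appeal to Kuttner's Theorem B.
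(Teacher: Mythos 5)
Your proposal is correct and follows essentially the same route as the paper: the Misiewicz--Richards/Williamson reduction \eqref{M0}, the identification \eqref{M2} of the kernel as a ${}_2F_3$ of type \eqref{W9} with $a=(\alpha+\beta+1)/2$, $c=\beta$, positivity on the region given by Corollary \ref{corollary1}, and propagation along the slope $-1$ lines $\alpha+\beta=\mathrm{const}$ via Lemma \ref{lemma3}. If anything, your treatment of the corner $(\beta,\alpha)=(-1/2,1/2)$, where Corollary \ref{corollary1} only yields $\Omega\ge 0$, is more careful than the paper's, which silently includes that point in the strip \eqref{M3}.
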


\smallskip

\begin{remark}
In Figure \ref{Fig4}, the trapezoid with vertices
$$\,(-1, 0), \,(-1/2, -1/2),\,(3/2, 3/2), \,(-1/2, 1/2)\,$$
is newly added to the positivity region of Misiewicz and Richards which corresponds to
the infinite polygon bounded by
$\,\alpha=-\beta-1,\,\,\alpha=\beta,\,\,\alpha=\frac 32\,.$

\begin{figure}[!h]
 \centering
 \includegraphics[width=280pt, height=220pt]{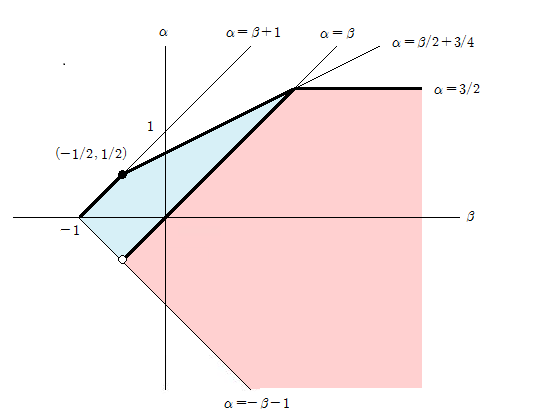}
 \caption{The positivity region in the case $\,0<\mu\le 1\le\lambda\,$ which improves the ones of Misiewicz and Richards (pink)
 and Kuttner (black dot).}
\label{Fig4}
\end{figure}

\end{remark}

\begin{proof}
The ${}_2F_3$ hypergeometric functions of \eqref{M2} are of type \eqref{W9}
with
$$a= \frac{\alpha+\beta+1}{2}\,,\quad c=\beta.$$
It is simple to find condition (i) of Corollary \ref{corollary1}
gives the infinite strip
\begin{equation}\label{M3}
\beta\ge -\frac 12\,,\quad \frac 12\le \alpha<\frac 32\,,\quad \alpha\le \frac 12\left(\beta+\frac 32\right)
\end{equation}
as a positivity region. Likewise, conditions (ii), (iii) correspond to the boundary lines
\begin{equation}\label{M4}
\beta\ge \frac 32\,,\, \alpha=\frac 32\quad\text{and}\quad -1<\beta\le -\frac 12\,,\,\alpha = \beta+1.
\end{equation}

To fill out the remaining positivity region, we observe from Lemma \ref{lemma3} that
if \eqref{M2} is positive with some parameters $\,\alpha_0, \beta_0,\,$ then positivity continues to hold true
for all parameters $\,\alpha_0-\delta,\,\beta_0+\delta,\,\delta\ge 0,\,$ that is, for all $\,\alpha, \beta\,$
lying on the half-line emanating from $(\beta_0,\alpha_0)$ defined by $\,\alpha = -\beta + \alpha_0 +\beta_0,\,\beta\ge \beta_0.\,$
By adding all half-lines emanating from $\,(\beta, \alpha)\in\mathcal{P}\,$ constructed from \eqref{M3}, \eqref{M4},
we obtain the full stated region $\mathcal{P}$.
\end{proof}

\smallskip

In the special case $\,\mu=1,\,$ reduction \eqref{M0} is unnecessary for
\begin{align}\label{M6}
&\int_0^x \left(x -t\right)^\lambda t^\alpha J_\beta(t) dt =\frac{B(\alpha+\beta+1, \lambda+1) x^{\lambda+\alpha+\beta+1}}
{2^\beta\,\Gamma(\beta+1)}\nonumber\\
&\qquad\qquad\quad\times\,\,
{}_2F_{3} \left(\begin{array}{c} \frac{\alpha+\beta+1}{2}\,, \,\frac{\alpha+\beta+2}{2}\\
\beta +1,\,\frac{\alpha+\beta+\lambda+2}{2},\,\frac{\alpha+\beta+\lambda+3}{2}\end{array}\biggl| -\frac{\,x^2}{4}\right)
\end{align}
in which the generalized hypergeometric functions are of type \eqref{W8} with
$$ a= \frac{\alpha+\beta+1}{2}\,,\,\,b= \frac{\lambda+1}{2}\,,\,\,c=\beta.$$

A direct application of Theorem \ref{theorem3} yields the following improved result.
\smallskip

\begin{theorem}\label{theorem5}
For $\,\lambda>0\,$ and $\,(\beta, \alpha)\in\mathcal{O},\,$ we have
\begin{equation*}
\int_0^x \left(x-t\right)^\lambda t^\alpha J_\beta(t)dt>0 \qquad(x>0),
\end{equation*}
where $\mathcal{O}$ denotes the set of parameters defined by
\begin{align*}
\mathcal{O} = \left\{\begin{aligned} &{\quad\beta>-1,\,\,-\beta+1-\lambda<\alpha\le\min \left[
\frac 12\left(\beta+\lambda +\frac 12\right), \,\lambda+\frac 12\right],} &{\text{if}\quad\lambda<1,}\\
&{\beta>-1,\,\,-\beta-1<\alpha\le\min \left[
\beta+1,\, \frac 12\left(\beta+\lambda +\frac 12\right), \,\lambda+\frac 12\right],} &{\text{if}\quad\lambda\ge 1.}
\end{aligned}\right.
\end{align*}
\end{theorem}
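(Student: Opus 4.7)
My plan is to apply Theorem \ref{theorem3} to the ${}_2F_3$ hypergeometric function appearing in the reduction \eqref{M6}, and then to enlarge the resulting positivity region via the half-line extension trick based on Lemma \ref{lemma3}, just as in the proof of Theorem \ref{theorem4}.

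Substituting $a = (\alpha+\beta+1)/2$, $b = (\lambda+1)/2$, $c = \beta$ into the main hypothesis of Theorem \ref{theorem3} turns the four inequalities into the explicit system
\begin{equation*}
\alpha \ge \tfrac12,\quad \alpha < \beta + 1,\quad \alpha < \lambda + \tfrac12,\quad -\tfrac{\beta+\lambda}{2} - \tfrac14 < \alpha \le \tfrac{\beta+\lambda}{2} + \tfrac14,
\end{equation*}
which carves out a base positivity region $\mathcal{R}_0$ lying inside the strip $\alpha \ge 1/2$. Boundary case (i) of Theorem \ref{theorem3} attaches the line $\alpha = \lambda + 1/2$ for $\beta \ge \lambda + 1/2$, while boundary case (ii) ($c = a-1$) adds the segment $\alpha = \beta + 1$ for $-1 < \beta \le \lambda - 3/2$; the latter contribution is available precisely when $\lambda \ge 1$.

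The observation that enables the extension is that, among the three denominator parameters of the ${}_2F_3$ in \eqref{M6}, only $\beta + 1$ depends on $\beta$ and $\alpha$ separately; the other two, $(\alpha+\beta+\lambda+2)/2$ and $(\alpha+\beta+\lambda+3)/2$, together with the numerator parameters, depend only on $\alpha + \beta$. Lemma \ref{lemma3} therefore implies that positivity at $(\beta_0, \alpha_0)$ propagates along the entire half-line $\{(\beta_0 + \delta, \alpha_0 - \delta) : \delta \ge 0\}$. Sweeping $\mathcal{R}_0$ together with the two boundary contributions by these half-lines should give the full region $\mathcal{O}$.

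The main obstacle is the bookkeeping required to verify that the swept set coincides with $\mathcal{O}$, and in particular to explain the dichotomy between $\lambda < 1$ and $\lambda \ge 1$. For $\lambda < 1$, boundary case (ii) is unavailable, and the lower reach of the extended region is set by the corner $(\beta, \alpha) = (1/2 - \lambda, 1/2)$ of $\mathcal{R}_0$ (where the line $\alpha = (\beta+\lambda)/2 + 1/4$ meets $\alpha = 1/2$), producing the bound $\alpha + \beta \ge 1-\lambda$ of the theorem. For $\lambda \ge 1$, sweeping boundary case (ii) fills in the region down to the mere convergence line $\alpha + \beta > -1$ while simultaneously imposing the upper bound $\alpha \le \beta + 1$ on the newly added portion $\beta \le \lambda - 3/2$; combining these with the swept bounds $\alpha \le (\beta + \lambda + 1/2)/2$ and $\alpha \le \lambda + 1/2$ inherited from $\mathcal{R}_0$ and boundary case (i) then recovers $\mathcal{O}$ exactly.
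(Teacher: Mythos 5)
Your proposal is correct and follows exactly the route the paper intends (and omits): reduce via \eqref{M6} to a ${}_2F_3$ of type \eqref{W8} with $a=\frac{\alpha+\beta+1}{2}$, $b=\frac{\lambda+1}{2}$, $c=\beta$, read off the base region and the two boundary contributions from Theorem \ref{theorem3}, and sweep along the half-lines $\alpha+\beta=\mathrm{const}$ using Lemma \ref{lemma3} as in the proof of Theorem \ref{theorem4}; your bookkeeping of the resulting region, including the $\lambda<1$ versus $\lambda\ge 1$ dichotomy, checks out. (One inherited edge case worth flagging: for $\lambda=1$, $(\beta,\alpha)=(-\frac12,\frac12)$ corresponds to the excluded pair $(a,b)=(\frac12,1)$ of Theorem \ref{theorem3}(ii), where the integral equals a positive multiple of $1-\cos x$ and is only nonnegative --- a defect of the theorem's statement rather than of your argument.)
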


As the proof proceeds in the same fashion as above, we shall omit it. We
refer to Gasper \cite{Ga1}, \cite{Ga2} for related results and further applications.

In view of the identity
$$J_{-\frac 12}(t) = \sqrt{\frac {2}{\pi\,t}}\,\cos t,$$
the case $\,\beta = -1/2\,$ in Theorem \ref{theorem1}, Theorem \ref{theorem4}
corresponds to Kuttner's result.

\smallskip

\begin{corollary}
If $\,\mu>0, \,\lambda\ge 0,\,-1<\alpha\le -\frac 12\,$ or $\,0<\mu\le 1\le\lambda,\, -1<\alpha\le 0,\,$ then
\begin{equation*}
\int_0^x \left(x^\mu -t^\mu\right)^\lambda t^{\alpha} \cos t\,dt>0 \qquad(x>0).
\end{equation*}
\end{corollary}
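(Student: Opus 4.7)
The plan is to reduce each of the two cases to one of the positivity results already established in the paper, via the elementary identity
\[
J_{-\frac 12}(t) = \sqrt{\frac{2}{\pi t}}\,\cos t,
\]
which rewrites the integrand as $t^\alpha\cos t = \sqrt{\pi/2}\,t^{\alpha+\frac 12} J_{-\frac 12}(t)$. Consequently,
\[
\int_0^x (x^\mu-t^\mu)^\lambda t^\alpha\cos t\,dt = \sqrt{\frac{\pi}{2}}\int_0^x (x^\mu-t^\mu)^\lambda t^{\alpha+\frac 12}\,J_{-\frac 12}(t)\,dt,
\]
so it suffices to verify that the pair $(\beta,\alpha') = (-\tfrac 12,\,\alpha+\tfrac 12)$ lies in the admissible region of the appropriate earlier theorem.

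In the first case I would appeal to Theorem \ref{theorem1}, whose region $\mathcal{R}$ has a first branch $\{\beta>-1,\,-\beta-1<\alpha'\le 0\}$. For $\beta=-\tfrac 12$ this becomes $-\tfrac 12 < \alpha' \le 0$, i.e.\ $-1 < \alpha \le -\tfrac 12$, which matches exactly the hypothesis $\mu>0$, $\lambda\ge 0$, $-1<\alpha\le -\tfrac 12$. One must note that the exceptional case $\alpha=\beta=\tfrac 12$ excluded in Theorem \ref{theorem1} does not arise here since $\beta=-\tfrac 12$, so strict positivity is obtained.

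In the second case I would apply Theorem \ref{theorem4}, whose region $\mathcal{P}$ requires $-\beta-1<\alpha'\le \min[\beta+1,\,\tfrac 12(\beta+\tfrac 32),\,\tfrac 32]$. For $\beta=-\tfrac 12$ this simplifies to $-\tfrac 12 < \alpha' \le \min[\tfrac 12,\,\tfrac 12,\,\tfrac 32] = \tfrac 12$, i.e.\ $-1 < \alpha \le 0$, exactly the stated range for $0<\mu\le 1\le\lambda$.

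The entire argument is a substitution followed by verification that the endpoint conditions in $\mathcal{R}$ and $\mathcal{P}$ specialize correctly at $\beta=-\tfrac 12$, so no real obstacle is anticipated; the only point requiring care is confirming that the exceptional configurations excluded from Theorems \ref{theorem1} and \ref{theorem4} do not occur at $\beta=-\tfrac 12$, which is immediate in both branches.
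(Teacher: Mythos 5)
Your proposal is correct and follows exactly the paper's route: the paper justifies this corollary precisely by the identity $J_{-\frac 12}(t)=\sqrt{2/(\pi t)}\,\cos t$ and the specialization $\beta=-\frac 12$ of Theorems \ref{theorem1} and \ref{theorem4}, and your verification of the parameter ranges (including that the exceptional case $\alpha=\beta=\frac 12$ of Theorem \ref{theorem1} cannot occur) is accurate.
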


In the case $\,\alpha=0,\,$ the problem of determining the positivity range of $\,\lambda = \lambda(\mu)\,$ is a long-standing
open problem and we refer to Golubov \cite{Go} and Gneiting, Konis and Richards \cite{GKR} for
partial results and further references.

\section{Buhmann's radial basis functions}
While studying scattered data approximations, Buhmann \cite{Bu} introduced a 4-parameter family of
compactly supported {\it radial basis functions}
on $\R^n$ defined as follows.

\smallskip

\begin{definition}(Buhmann's radial basis functions)\\
For $\,\delta>0,\,\rho\ge 0,\,\lambda>-1\,$ and $\,\alpha>- n/2-1,\,$ define
\begin{equation}\label{B1}
W(\bx)=\int_{0}^{\infty} \left(1-\frac{\|\bx\|^2}{t}\right)_{+}^{\lambda}
t^{\alpha}\left(1-t^{\delta}\right)_{+}^{\rho}\,dt\qquad(\bx\in\R^n),
\end{equation}
where $\|\bx\|$ stands for the Euclidean norm $\,\|\bx\|^2= \bx\cdot\bx.$
\end{definition}

For this family of compactly supported radial functions,
Buhmann proved the following {\it positive definiteness} (see also \cite{Z}).

\smallskip

\begin{itemize}
\item[{}] {\bf Theorem C.} (Buhmann, \cite{Bu}) {\it Let $\mathcal{B}_{n}$ be the set of parameters $(\lambda, \alpha)$ defined by
\begin{align*}
\mathcal{B}_{1}&=\left\{\lambda>-\frac{1}{2},\,
-1<\alpha\le\min\left(\lambda-\frac{1}{2},\,\frac \lambda 2\right)\right\},\\
\mathcal{B}_{2}&=\left\{\lambda>-\frac{1}{2},\,
-1<\alpha\le\min\left[\frac{1}{2}\left(\lambda-\frac{1}{2}\right), \lambda-\frac{1}{2}\right]\right\},\\
\mathcal{B}_{3}&=\left\{\lambda\ge0,\, -1<\alpha\le\frac{1}{2}\left(\lambda-1\right)\right\},\\
\mathcal{B}_{n}&=\left\{\lambda>\frac{n-5}{2},\, -1<\alpha\le\frac{1}{2}
\left(\lambda-\frac{n-1}{2}\right)\right\} \quad \text{if} \quad n\ge 4.
\end{align*}
For $\,0<\delta\le\frac{1}{2}\,,\,\rho\ge1,\,$ if $\,(\lambda, \alpha)\in\mathcal{B}_{n},\,$ then
$W$ has a strictly positive Fourier transform
and hence induces positive definite matrices on $\R^{n}$.}
\end{itemize}

As Buhmann calculated, the Fourier transform of $W$ is $\,\widehat{W}(\xi) = \omega(\|\xi\|),\,\xi\in\R^n,\,$
where
\begin{equation}\label{B2}
\omega(x) =\frac{(2\pi)^{\frac n 2}2^{\lambda+1}\Gamma(\lambda+1)}{x^{n+2+2\delta\rho+2\alpha}}
\int_{0}^{x}\left(x^{2\delta}-t^{2\delta}\right)^{\rho}t^{2\alpha+1-\lambda+\frac{n}{2}}J_{\lambda+\frac{n}{2}}(t)\,dt
\end{equation}
for $\,x>0\,$ and Buhmann exploited Theorem A to establish the above result.

Our purpose here is to extend $\mathcal{B}_n$ in several directions.

We begin with extending Theorem C with the aid of Theorem \ref{theorem4}.

\smallskip

\begin{theorem}\label{theorem6}
Let $\mathcal{P}_{n}$ be the set of parameters $(\lambda, \alpha)$ defined by
\begin{equation*}
\mathcal{P}_{n}=\left\{\lambda>-1,\, -\frac{n+2}{2}<\alpha\le\min\left[\frac{1}{4}\left(3\lambda-\frac{n+1}{2}\right),\,\frac{1}{2}\left(\lambda-\frac{n-1}{2}\right)\right]\right\}.
\end{equation*}
For $\,0<\delta\le\frac{1}{2}\,,\,\rho\ge1,\,$ if $\,(\lambda, \alpha)\in\mathcal{P}_{n},\,$ then
$W$ has a strictly positive Fourier transform
and hence induces positive definite matrices on $\R^{n}$.
\end{theorem}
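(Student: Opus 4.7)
The plan is to read the Fourier-side formula \eqref{B2} as a Bessel integral of the exact form \eqref{A1} and then invoke Theorem \ref{theorem4} directly. Concretely, I would set
$$\mu_*=2\delta,\qquad \lambda_*=\rho,\qquad \alpha_*=2\alpha+1-\lambda+\frac{n}{2},\qquad \beta_*=\lambda+\frac{n}{2},$$
so that the integrand in \eqref{B2} becomes $(x^{\mu_*}-t^{\mu_*})^{\lambda_*}\,t^{\alpha_*}\,J_{\beta_*}(t)$. The hypotheses $0<\delta\le\tfrac12$ and $\rho\ge 1$ force $0<\mu_*\le 1\le\lambda_*$, placing us precisely in the regime covered by Theorem \ref{theorem4}.

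The key step is the algebraic check that the condition $(\lambda,\alpha)\in\mathcal{P}_n$ translates into $(\beta_*,\alpha_*)\in\mathcal{P}$. The requirement $\beta_*>-1$ amounts to $\lambda>-1-n/2$, automatic from $\lambda>-1$. The lower bound $\alpha_*>-\beta_*-1$ unwinds to $\alpha>-(n+2)/2$, which is the first defining inequality of $\mathcal{P}_n$. Among the three upper bounds in $\mathcal{P}$, the bound $\alpha_*\le \tfrac12(\beta_*+\tfrac32)$ becomes $\alpha\le\tfrac14(3\lambda-\tfrac{n+1}{2})$, the bound $\alpha_*\le \tfrac32$ becomes $\alpha\le\tfrac12(\lambda-\tfrac{n-1}{2})$, and the remaining bound $\alpha_*\le\beta_*+1$ reduces to $\alpha\le\lambda$, which is implied by either of the first two as soon as $\lambda>-1$; this explains why $\mathcal{P}_n$ omits the $\alpha\le\lambda$ constraint. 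Once this dictionary is in place, Theorem \ref{theorem4} yields
$$\int_0^x (x^{2\delta}-t^{2\delta})^{\rho}\,t^{2\alpha+1-\lambda+n/2}\,J_{\lambda+n/2}(t)\,dt>0\qquad(x>0),$$
so $\omega(x)>0$ for every $x>0$, the constant prefactor in \eqref{B2} being manifestly positive for $\lambda>-1$.

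To close the argument I would pass from positivity of $\omega$ to positive definiteness of $W$ by a standard appeal to Bochner's theorem: since $\widehat W(\xi)=\omega(\|\xi\|)$ is strictly positive and integrable on $\R^n$, the function $W$ is (up to a normalizing constant) the Fourier transform of a strictly positive integrable function, and consequently the matrices $\bigl(W(\bx_i-\bx_j)\bigr)_{i,j}$ are strictly positive definite at any set of distinct nodes $\bx_1,\dots,\bx_N\in\R^n$. The main obstacle, if any, is the parameter-matching bookkeeping of the second paragraph; all of the substantive analytic content has already been absorbed into Theorem \ref{theorem4}, and the present theorem is essentially its specialization to Buhmann's representation \eqref{B2}.
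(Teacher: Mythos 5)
Your proposal is correct and follows exactly the route the paper takes: the paper's own justification is the one-line remark that the proof "follows trivially upon renaming parameters" $2\delta\to\mu$, $\rho\to\lambda$, $2\alpha+1-\lambda+n/2\to\alpha$, $\lambda+n/2\to\beta$ in \eqref{B2} and applying Theorem \ref{theorem4}. Your parameter dictionary, the verification that the $\alpha\le\lambda$ constraint is absorbed by the other two bounds when $\lambda>-1$, and the Bochner step simply spell out what the paper leaves implicit.
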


\smallskip

\begin{figure}[!h]
 \centering
 \includegraphics[width=300pt, height=220pt]{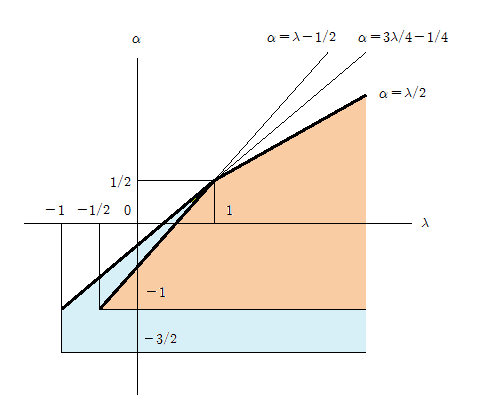}
 \caption{The regions of positive definiteness $\mathcal{B}_1$ (yellow) and $\mathcal{P}_1$.}
\label{Fig5}
\end{figure}

\medskip

\begin{remark} The proof follows trivially upon renaming parameters, that is,
$$2\delta\to \mu,\,\,\rho\to\lambda, \,\,2\alpha+1-\lambda+n/2\to\alpha,\,\,\lambda+n/2\to\beta.$$
It is easy to observe $\,\mathcal{B}_n\subset\mathcal{P}_n\,$ in a proper way,
as shown in Figure \ref{Fig5} in the one-dimensional case. In addition, an inspection on the two boundary lines
of $\mathcal{P}_n$ reveals
\begin{equation*}
\mathcal{P}_{n}=\left\{\lambda>-1,\, -\frac{n+2}{2}<\alpha\le\frac{1}{2}\left(\lambda-\frac{n-1}{2}\right)\right\}
\quad\text{for}\quad n\ge 5.
\end{equation*}
\end{remark}

\smallskip

In the case $\,\delta=1/2,\,$ Theorem \ref{theorem5} gives an improvement.
\smallskip

\begin{theorem}\label{theorem7}
For $\,\delta=\frac 12\,,\,\rho\ge 1,\,$ Theorem \ref{theorem6} continues to hold true if $\mathcal{P}_n$
is replaced by the set $\mathcal{O}_n$ of parameter pairs $(\lambda, \alpha)$ defined as
\begin{equation*}
\mathcal{O}_n = \left\{\lambda>-1,\,-\frac{n+2}{2}<\alpha\le\min\left[\lambda,\,\frac{1}{4}\left(3\lambda-\frac{n+3}{2}+\rho\right),\,
\frac{1}{2}\left(\lambda-\frac{n+1}{2}+\rho\right)\right]\right\}.
\end{equation*}
\end{theorem}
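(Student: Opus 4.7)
The plan is to imitate the proof of Theorem \ref{theorem6}, but with Theorem \ref{theorem5} playing the role of Theorem \ref{theorem4}. This substitution is available precisely because at $\delta = 1/2$ the outer exponent $2\delta$ in \eqref{B2} equals $1$, placing the integrand inside the sharper class of kernels handled by Theorem \ref{theorem5}, rather than the more general class of Theorem \ref{theorem4}.

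First I would substitute $\delta = 1/2$ in \eqref{B2}, obtaining
$$\omega(x) = \frac{(2\pi)^{n/2}\,2^{\lambda+1}\Gamma(\lambda+1)}{x^{n+2+\rho+2\alpha}}\int_{0}^{x}(x - t)^{\rho}\,t^{2\alpha+1-\lambda+n/2}\,J_{\lambda+n/2}(t)\,dt.$$
Under the renaming $\lambda \to \rho$, $\alpha \to 2\alpha + 1 - \lambda + n/2$, $\beta \to \lambda + n/2$, this is exactly the integral appearing in Theorem \ref{theorem5}. Since $\rho \ge 1$, the relevant branch of the definition of $\mathcal{O}$ is the second one, which requires
$$-\beta - 1 < \alpha' \le \min\!\left[\beta + 1,\,\tfrac{1}{2}\!\left(\beta + \rho + \tfrac{1}{2}\right),\,\rho + \tfrac{1}{2}\right],$$
where $\beta,\alpha'$ denote the renamed variables.

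Next I would solve these four inequalities for the original $\alpha$. A brief computation shows the lower bound becomes $\alpha > -(n+2)/2$, while the three upper bounds translate respectively into $\alpha \le \lambda$, $\alpha \le \frac{1}{4}(3\lambda - (n+3)/2 + \rho)$, and $\alpha \le \frac{1}{2}(\lambda - (n+1)/2 + \rho)$. These are precisely the inequalities defining $\mathcal{O}_n$. Hence for $(\lambda, \alpha) \in \mathcal{O}_n$, Theorem \ref{theorem5} yields $\omega(x) > 0$ for all $x > 0$, and the positive-definiteness conclusion follows from Bochner's theorem in exactly the same manner as Theorem C.

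The main obstacle is clerical rather than analytic: I would need to verify that the translated hypothesis $\beta > -1$ of Theorem \ref{theorem5}, namely $\lambda > -1 - n/2$, is automatic from the standing $\lambda > -1$; and to notice that the bound $\alpha \le \lambda$, which was subsumed by the other two in the definition of $\mathcal{P}_n$, becomes an independent constraint here once $\rho$ is large enough, and therefore must appear explicitly in $\mathcal{O}_n$. No further hypergeometric analysis is required.
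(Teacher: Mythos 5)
Your proposal is correct and follows exactly the route the paper intends: substitute $\delta=\tfrac12$ into \eqref{B2} so that the kernel becomes $(x-t)^{\rho}$, apply Theorem \ref{theorem5} (second branch, since $\rho\ge 1$) under the same renaming of parameters used for Theorem \ref{theorem6}, and translate the four inequalities back to $(\lambda,\alpha)$, which indeed yields precisely the defining conditions of $\mathcal{O}_n$ (including the now-independent bound $\alpha\le\lambda$). The paper omits this verification as routine, and your computations check out.
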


\smallskip

\begin{remark} In the special occasion $\,\delta= \frac 12,\, \rho=\frac{n+1}{2} +\sigma,\,\lambda=\alpha\,$ with $\,\sigma\ge 0,\,$
Buhmann's radial basis functions take the form
\begin{equation}
W(\bx) = 2\int_{\|\bx\|}^1\left(t^2-\|\bx\|^2\right)^{\alpha} t\left(1-t\right)^{\frac{n+1}{2}+\sigma}\, dt,
\end{equation}
known as Wendland's functions (see \cite{S}, \cite{We1}, \cite{We2}), which are are easily seen to be positive definite for $\,-1<\alpha\le \sigma-1\,$
according to the above theorem. Obviously, Buhmann's original theorem, Theorem C, is not applicable in this case.
\end{remark}

\smallskip

In the unrestricted case $\,\delta>0, \,\rho\ge 0\,$, Theorem \ref{theorem1} yields the following.

\smallskip

\begin{theorem}\label{theorem8}
Let $\mathcal{R}_{n}$ be the set of parameters $(\lambda, \alpha)$ defined by
\begin{align*}
\mathcal{R}_{1}&=\left\{\lambda>-1,\, -\frac{3}{2}<\alpha\le\frac{1}{2}\left(\lambda-\frac{3}{2}\right)\right\}\\
&\qquad\cup\,\left\{\lambda>-\frac{1}{2},\,\frac{1}{2}\left(\lambda-\frac{3}{2}\right)<\alpha\le
\min\left[\lambda-\frac{1}{2},\,\frac{1}{2}\left(\lambda-1\right)\right]\right\}\,,\\
\mathcal{R}_{n}&=\left\{\lambda>-1,\, -\frac{n+2}{2}<\alpha\le\min\left[\lambda-\frac{1}{2},\,
\frac{1}{2}\left(\lambda-\frac{n+1}{2}\right)\right]\right\},\quad n\ge 2.
\end{align*}
For $\,\delta>0, \,\rho\ge 0,\,$ if $\,(\lambda, \alpha)\in\mathcal{R}_{n},\,$ then each
$W$ has a nonnegative non-vanishing Fourier transform and hence induces positive definite matrices on $\R^{n}$.
\end{theorem}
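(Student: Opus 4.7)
The plan is to reduce directly to Theorem \ref{theorem1}. Formula \eqref{B2} expresses $\omega(x)$ as a strictly positive prefactor times the Bessel integral
$$\int_0^x \bigl(x^{2\delta}-t^{2\delta}\bigr)^{\rho}\,t^{2\alpha+1-\lambda+\frac{n}{2}}\,J_{\lambda+\frac{n}{2}}(t)\,dt,$$
so it is enough to show this integral is positive for $x>0$, or at least nonnegative and not identically zero in a possible degenerate case. With the identification
$$\mu=2\delta,\qquad \tilde\lambda=\rho,\qquad \tilde\beta=\lambda+\tfrac{n}{2},\qquad \tilde\alpha=2\alpha+1-\lambda+\tfrac{n}{2},$$
the unrestricted hypotheses $\delta>0$, $\rho\ge 0$ match $\mu>0$, $\tilde\lambda\ge 0$ of Theorem \ref{theorem1}, and the question becomes whether $(\tilde\beta,\tilde\alpha)$ lies in the region $\mathcal{R}$.

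First I would pull back $\mathcal{R}$ to the $(\lambda,\alpha)$-plane. The convergence condition $-\tilde\beta-1<\tilde\alpha$ becomes uniformly $\alpha>-(n+2)/2$, accounting for the left boundary of every $\mathcal{R}_n$. The first branch of $\mathcal{R}$, where $\tilde\alpha\le 0$, translates to the upper bound $\alpha\le\tfrac12(\lambda-(n+2)/2)$. The second branch, where $\tilde\beta>0$ and $0<\tilde\alpha\le\min(\tilde\beta,\tfrac12)$, translates to the extra requirement $\lambda>-n/2$ together with $\alpha\le\min\bigl[\lambda-\tfrac12,\ \tfrac12(\lambda-(n+1)/2)\bigr]$.

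Then I would take the union of the two branches and compare with $\mathcal{R}_n$. For $n=1$ the second-branch constraint $\lambda>-1/2$ is strictly stronger than the standing assumption $\lambda>-1$, so the two branches stay separate and reproduce the two-piece $\mathcal{R}_1$ verbatim. For $n\ge 2$ the constraint $\lambda>-n/2$ is automatic from $\lambda>-1$, and a short computation shows the first-branch bound $\tfrac12(\lambda-(n+2)/2)$ sits strictly below the second-branch bound, so the union collapses to the single inequality defining $\mathcal{R}_n$. Finally, the exceptional locus $\tilde\lambda=0$, $\tilde\alpha=\tilde\beta=1/2$ of Theorem \ref{theorem1} pulls back to the isolated boundary point $(0,-1/2)$ when $n=1$ and $(-1/2,-1)$ when $n=2$, and is vacuous for $n\ge 3$ (it would force $\lambda\le -1$); at these exceptional points the integral reduces to a $\sin^2$ multiple, so $\omega$ remains nonnegative and non-vanishing and still guarantees positive definiteness.

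The argument involves no analytic difficulty once Theorem \ref{theorem1} is in hand. The main obstacle is purely bookkeeping: keeping the case analysis straight across the $n=1$ versus $n\ge 2$ dichotomy, where the binding constraint $\lambda>-n/2$ from the second branch toggles between being an active restriction and being automatic. This is precisely what forces $\mathcal{R}_1$ to be stated as a union of two pieces while $\mathcal{R}_n$ for $n\ge 2$ collapses to a single inequality.
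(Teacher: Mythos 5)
Your proposal is correct and follows exactly the route the paper intends: the paper gives no separate proof for this theorem, merely noting that Theorem \ref{theorem1} applied to formula \eqref{B2} under the renaming $2\delta\to\mu$, $\rho\to\lambda$, $2\alpha+1-\lambda+n/2\to\alpha$, $\lambda+n/2\to\beta$ yields the result. Your bookkeeping — the pull-back of both branches of $\mathcal{R}$, the collapse of the two pieces into one for $n\ge 2$ because $\lambda>-n/2$ becomes automatic, and the identification of the exceptional points $(0,-1/2)$ for $n=1$ and $(-1/2,-1)$ for $n=2$ where $\widehat{W}$ is only nonnegative and non-vanishing — checks out and matches the paper's remark following the theorem.
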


\smallskip

\begin{remark} The Fourier transform of $W$ is indeed strictly positive unless
\begin{equation}\label{B3}
\rho=0,\,\alpha=-\frac{n}{2}\,,\,\lambda=-\frac{n-1}{2}\,,\, n= 1, 2.
\end{equation}
In such an exceptional case of \eqref{B3}, it is simple to evaluate
\begin{align}
W(\bx) = \left\{\begin{aligned} &{2\left( 1- \|\bx\|\right)} &{\text{if}\quad n=1,}\\
&{2\ln\left(\frac{1 +\sqrt{1-\|\bx\|^2\,}}{\|\bx\|}\right)} &{\text{if}\quad n=2,}\end{aligned}\right.
\end{align}
for $\,\|\bx\|\le 1\,$ and zero otherwise. Moreover, its Fourier transform is given by
\begin{equation}
\widehat{W}(\xi) = 2\pi^{\frac{n-1}{2}} \Gamma\left(\frac{3-n}{2}\right)\left[\frac{\sin(\|\xi\|/2)}
{\|\xi\|/2}\right]^2\ge 0\qquad(\xi\in\R^n).
\end{equation}
\end{remark}

\smallskip

An improvement in the case $\,\delta =1\,$ owes to Theorem \ref{theorem2}.

\smallskip

\begin{theorem}
For $\,\delta=1, \,\rho\ge 0,\,$ Theorem \ref{theorem8} continues to hold true if $\mathcal{R}_n$ is replaced by
the set $\mathcal{S}_n$ of parameter pairs $(\lambda, \alpha)$ defined as
\begin{align*}
\mathcal{S}_{1}&=\left\{\lambda>-1,\, -\frac{3}{2}<\alpha\le\frac{1}{2}\left(\lambda-\frac{3}{2}\right)\right\}\\
&\quad\cup\,\left\{\lambda>-\frac{1}{2},\,\frac{1}{2}\left(\lambda-\frac{3}{2}\right)<\alpha\le
\min\left[\lambda-\frac{1}{2},\,\frac{1}{2}\left(\lambda-1+\rho\right)\right]\right\},\\
\mathcal{S}_{n}&=\left\{\lambda>-1,\, -\frac{n+2}{2}<\alpha\le\min\left[\lambda-\frac{1}{2},\,
\frac{1}{2}\left(\lambda-\frac{n+1}{2}+\rho\right)\right]\right\},\quad n\ge 2.
\end{align*}
\end{theorem}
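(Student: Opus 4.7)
The plan is to follow the template used to derive Theorem \ref{theorem7} from Theorem \ref{theorem5}: specialize the Buhmann Fourier transform formula \eqref{B2} to $\delta=1$ and invoke the corresponding $\mu=2$ positivity result, namely Theorem \ref{theorem2}. Setting $\delta=1$ in \eqref{B2} expresses $\omega(x)$, up to a positive constant times a positive power of $x$, as
\begin{equation*}
\int_0^x (x^2-t^2)^{\rho}\,t^{2\alpha+1-\lambda+n/2}\,J_{\lambda+n/2}(t)\,dt,
\end{equation*}
which matches the integrand of Theorem \ref{theorem2} under the renaming $\lambda_{\mathrm{Thm}}\mapsto\rho$, $\alpha_{\mathrm{Thm}}\mapsto 2\alpha+1-\lambda+n/2$, $\beta_{\mathrm{Thm}}\mapsto\lambda+n/2$.

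For $\rho>0$, the second step is to pull the admissible set $\mathcal{S}$ back through this renaming. A short calculation converts the three linear inequalities cleanly: $\alpha_{\mathrm{Thm}}\le 0$ becomes $\alpha\le\tfrac12(\lambda-\tfrac{n+2}{2})$; $\alpha_{\mathrm{Thm}}\le\beta_{\mathrm{Thm}}$ becomes $\alpha\le\lambda-\tfrac12$; and $\alpha_{\mathrm{Thm}}\le\rho+\tfrac12$ becomes $\alpha\le\tfrac12(\lambda-\tfrac{n+1}{2}+\rho)$. For $n\ge 2$ the standing hypothesis $\lambda>-1$ already forces $\beta_{\mathrm{Thm}}>0$, so only the second branch of $\mathcal{S}$ is active and intersecting its two upper bounds reproduces exactly $\mathcal{S}_n$. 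For $n=1$ the quantity $\beta_{\mathrm{Thm}}=\lambda+\tfrac12$ may be non-positive, so both branches of $\mathcal{S}$ must be used: the first supplies the lower strip $-\tfrac32<\alpha\le\tfrac12(\lambda-\tfrac32)$ under the weaker constraint $\lambda>-1$, while the second supplies the complementary slice $\tfrac12(\lambda-\tfrac32)<\alpha\le\min[\lambda-\tfrac12,\tfrac12(\lambda-1+\rho)]$ under $\lambda>-\tfrac12$, giving precisely the two-part description of $\mathcal{S}_1$.

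The endpoint $\rho=0$ is not covered by Theorem \ref{theorem2}, but a direct inspection of the definition shows that $\mathcal{S}_n$ reduces to $\mathcal{R}_n$ when $\rho=0$, so this case is already contained in Theorem \ref{theorem8}. Combining the two ranges, $\omega(x)>0$ for $x>0$ throughout the interior of $\mathcal{S}_n$, and the Bochner-type conclusion of Theorem \ref{theorem8} carries over verbatim to yield positive definiteness. No step is genuinely hard; the main care needed is bookkeeping, to make sure the two branches of $\mathcal{S}$ reassemble into the piecewise definition of $\mathcal{S}_1$ without gap or overlap, and to verify that the exceptional boundary case $\alpha_{\mathrm{Thm}}=\beta_{\mathrm{Thm}}=\rho+\tfrac12$ of Theorem \ref{theorem2} (where only $\ge 0$ is asserted) does not intrude on the interior of $\mathcal{S}_n$.
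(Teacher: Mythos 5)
Your proposal is correct and coincides with the paper's (essentially unwritten) proof: the authors simply invoke Theorem \ref{theorem2} under the renaming $2\delta\to\mu$, $\rho\to\lambda$, $2\alpha+1-\lambda+n/2\to\alpha$, $\lambda+n/2\to\beta$ already recorded in the remark following Theorem \ref{theorem6}, and your pullback of the two branches of $\mathcal{S}$ (including the observation that for $n\ge 2$, and for $n=1$ with $\lambda>-1/2$, the first branch is absorbed into the second) reproduces $\mathcal{S}_n$ exactly. The only point worth sharpening is that the exceptional corner $\alpha_{\mathrm{Thm}}=\beta_{\mathrm{Thm}}=\rho+\tfrac12$, i.e.\ $\alpha=\rho-\tfrac n2$, $\lambda=\rho-\tfrac{n-1}{2}$, does lie in $\mathcal{S}_n$ (both upper bounds hold with equality there); this is harmless because Theorem \ref{theorem8}'s conclusion only asserts a nonnegative non-vanishing Fourier transform, and the remark after the theorem records precisely this exceptional case.
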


\smallskip

\begin{remark}
The Fourier transform of $W$ is strictly positive unless
\begin{equation}
\alpha=\rho-\frac{n}{2}\,,\,\lambda=\rho-\frac{n-1}{2}\,,\, 1\le n\le\left[2\rho+3\right].
\end{equation}
In such an exceptional case, $\,W,\,\widehat{W}\,$ are explicitly given by
\begin{align}
\left\{\begin{aligned} &{
W(\bx) = 2\int_{\|\bx\|}^1\left(t^2-\|\bx\|^2\right)^{\rho+\frac 12-\frac n2} \left(1-t^2\right)^\rho\, dt,}\\
&{\widehat{W}(\xi) = \frac{\pi^{\frac{n+1}{2}}\Gamma(\rho+1)\Gamma\left(\frac{2\rho+3-n}{2}\right)}{
(\|\xi\|/2)^{2\rho+1}}
J_{\rho+\frac 12}^2\left(\frac{\|\xi\|}{2}\right)}\end{aligned}\right.
\end{align}
for $\,\|\bx\|\le 1\,$ and $\,\xi\in\R^n.$ In the special case $\,\rho = \frac{n-1}{2}\,,$ radial basis function
$W$ is often referred to as Euclid's hat function (see Gneiting \cite{Gn}).
\end{remark}

\section{Appendix: Newton diagram of positivity}
We shall assume $\,x>0\,$ in what follows and write
\begin{align}\label{N1}
\J_\nu(x) = {}_0F_1\left(\nu +1\,; -\frac{\,x^2}{4}\right)\qquad(\nu>-1).
\end{align}
In view of \eqref{J1}, it is evident that the $\J_\nu$ share positive zeros in common with
Bessel functions $J_\nu$. A basic principle of
positivity is the following analogue of Lemma \ref{lemma3} for $\F$ hypergeometric functions
which can be proved in the same manner.

\smallskip

\begin{lemma}\label{lemmaA} For $\,a>0,\,b>0,\,c>0,\,$ suppose that
$$\F\left(a\,; b, c\,; -\frac{\,x^2}{4}\right)\ge 0.$$
Then for any $\,0\le\gamma<a,\,\delta\ge 0,\,\epsilon\ge 0,\,$
not simultaneously zero,
$$\,\F\left(a-\gamma\,;b+\delta, \,c+\epsilon; -\frac{\,x^2}{4}\right)>0.$$
\end{lemma}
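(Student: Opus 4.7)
The strategy is to mimic the proof of Lemma \ref{lemma3}, realizing each of the three parameter shifts $a\to a-\gamma$, $b\to b+\delta$, $c\to c+\epsilon$ as averaging the given function $\phi(x)=\F\!\left(a;b,c;-x^2/4\right)$ against a strictly positive kernel on $[0,1]$, and then composing the operations. The beta-function manipulation carried out in Lemma \ref{lemma3} produces verbatim, for $\delta>0$,
\begin{equation*}
\F\!\left(a;\,b+\delta,\,c;\,-x^2/4\right)=\frac{2}{B(b,\delta)}\int_0^1\phi(xt)(1-t^2)^{\delta-1}t^{2b-1}\,dt,
\end{equation*}
together with the symmetric identity for the shift $c\to c+\epsilon$ when $\epsilon>0$.

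The main new ingredient is an Euler-type identity that \emph{decreases} the numerator parameter: for $\gamma\in(0,a)$,
\begin{equation*}
\F\!\left(a-\gamma;\,b,\,c;\,-x^2/4\right)=\frac{1}{B(a-\gamma,\gamma)}\int_0^1 s^{a-\gamma-1}(1-s)^{\gamma-1}\phi(x\sqrt{s})\,ds.
\end{equation*}
This is obtained by inserting the beta-integral representation
\[
\frac{(a-\gamma)_k}{(a)_k}=\frac{B(a-\gamma+k,\gamma)}{B(a-\gamma,\gamma)}=\frac{1}{B(a-\gamma,\gamma)}\int_0^1 s^{a-\gamma+k-1}(1-s)^{\gamma-1}\,ds
\]
into each coefficient of the defining series for $\F(a-\gamma;b,c;-x^2/4)$ and interchanging sum and integral, which is legitimate since $\phi$ is entire.

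With these three identities in hand the conclusion is almost immediate by composition. If $\gamma>0$, I would first apply the numerator-shift identity to $\phi\ge 0$; the resulting nonnegative function is then fed successively into the denominator-shift identities whenever $\delta>0$ or $\epsilon>0$. If $\gamma=0$, at least one of $\delta,\epsilon$ is positive and one applies the corresponding denominator-shift identity directly to $\phi$. In every case the integrand in the outermost integral is a strictly positive continuous weight times a nonnegative continuous function whose value at $0$ equals $\phi(0)=1$, so by continuity the integral is strictly positive for every $x>0$. The principal obstacle is minor: establishing the new numerator-shift identity cleanly and keeping track of which factor supplies the strict improvement. The hypothesis that $\gamma,\delta,\epsilon$ are not simultaneously zero is precisely what guarantees at least one convolution step with a strictly positive kernel, thereby lifting $\ge0$ to $>0$.
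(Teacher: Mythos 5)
Your proof is correct and follows exactly the route the paper intends: the paper omits the argument, merely remarking that Lemma \ref{lemmaA} ``can be proved in the same manner'' as Lemma \ref{lemma3}, and you carry that out, correctly supplying the one genuinely new ingredient (the fractional-integral identity lowering the numerator parameter via $(a-\gamma)_k/(a)_k=B(a-\gamma+k,\gamma)/B(a-\gamma,\gamma)$) and correctly extracting strict positivity from $\phi(0)=1$ and the continuity of the integrand.
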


\paragraph{Proof of Lemma \ref{lemma1}.}
For part (i), if $\,\phi\ge 0\,$ and $\,0<b\le a,\,$ then Lemma \ref{lemmaA} implies
$$\F\left(b\,;\,b, c\,; -\frac{\,x^2}{4}\right) = \J_{c-1}(x)>0,$$
which contradicts the fact $\J_{c-1}$
has infinitely many positive zeros. Thus $\,b>a\,$ and
$\,c>a\,$ by symmetry. In view of the asymptotic behavior (\cite{L})
\begin{align}\label{I3}
\phi(x) &=\frac{\Gamma(b)\Gamma(c)}{\Gamma(b-a)\Gamma(c-a)}
\,\left(\frac x2\right)^{-2a} \left[ 1+ O\left(x^{-2}\right)\right]\nonumber\\
& + \,\,\frac{\Gamma(b)\Gamma(c)}{\sqrt \pi\,\Gamma(a)}
\,\left(\frac x2\right)^{-\sigma} \left[\cos\left( x - \frac{\pi\sigma}{2}\right)
+ O\left(x^{-1}\right)\right]
\end{align}
as $\,x\to\infty,\,$ where $\,\sigma = b+c -a - 1/2,\,$
it is immediate to observe
the condition $\, \sigma\ge 2a\,$ is necessary, that is, $\,b+c\ge 3a + 1/2\,.$

Regarding part (ii), observe first (\cite{Wa}, Chapter 5) that
$$\F\left(a\,; a+\frac 12, \,2a\,; -\frac{\,x^2}{4}\right) = \J_{a-\frac 12}^2\left(\frac x2\right)\ge 0.$$
For $\,a=1/2,\,$ the two points of $\Lambda$ coincide and the positivity of $\phi$
with parameter pair $\,(b, c)\in P_{1/2}\,$ follows from Lemma \ref{lemmaA}. For $\,a\ne 1/2,\,$
if $(b, c)$ lies on the boundary line of $P_a$, that is, $\,c= 3a+1/2-b,\,$ then it is not hard
to compute the coefficients of Gasper's sums of squares series expansion by Saalsch\"utz's formula to deduce
\begin{align}\label{G7}
&\F\left(a\,; b, \, 3a + \frac 12 -b \,; -\frac{\,x^2}{4}\right)
=\Gamma^2\left(a+\frac 12\right)\left(\frac x4\right)^{-2a-1}\nonumber\\
&\quad\times\,\, \sum_{n=0}^\infty \frac{2n + 2a-1}{n+ 2a-1}\frac{(2a)_n}{n!}
\frac{(2a-b)_n \left(b-a- 1/2\right)_n}{(b)_n \left(3a+1/2-b\right)_n}
J^2_{n+ a-\frac 12}\left(\frac x2\right),
\end{align}
which is easily seen to be positive when $b$ lies strictly between $a+1/2$ and $2a$. The positivity
of $\phi$ for $\,(b, c)\in P_a\,$ now follows from this boundary
case and Lemma \ref{lemmaA}.
\qed

\bigskip

{{\large \bf Acknowledgements.}} Yong-Kum Cho is supported by the National Research Foundation of Korea Grant
funded by the Korean Government \# 20160925. Seok-Young Chung is supported by the Chung-Ang University
Excellent Student Scholarship in 2017. Hera Yun is supported by the Chung-Ang University
Research Scholarship Grants in 2014-2015.

\end{document}